\documentclass{llncs}

\usepackage{amssymb,stmaryrd,mathrsfs,dsfont,mathtools}

\newtheorem{notation}[theorem]{Notation}
\usepackage{enumitem}
\usepackage[ruled,linesnumbered]{algorithm2e}
\usepackage{algpseudocode}
\usepackage{tikz}
\tikzstyle{vertex}=[circle, draw, inner sep=0pt, minimum size=2.5pt, fill=black]
\newcommand{\vertex}{\node[vertex]}
 
\usetikzlibrary{decorations.markings}
\usetikzlibrary{decorations.pathmorphing}

\begin{document}

\frontmatter         

\pagestyle{headings}  

\mainmatter             

\title{Line Graphs of Non-Word-Representable Graphs are Not Always Non-Word-Representable}

\titlerunning{Line Graphs of Non-Word-Representable Graphs are Not Always Non-Word-Representable}

\author{Khyodeno Mozhui \and Tithi Dwary \and K. V. Krishna}

\authorrunning{Khyodeno Mozhui \and K. V. Krishna}

\institute{Department of Mathematics\\ Indian Institute of Technology Guwahati, India\\
	\email{k.mozhui@iitg.ac.in};
	\email{tithi.dwary@iitg.ac.in};
	\email{kvk@iitg.ac.in}}

\maketitle         

\begin{abstract}
	A graph is said to be word-representable if there exists a word over its vertex set such that any two vertices are adjacent if and only if they alternate in the word. If no such word exists, the graph is non-word-representable. In the literature, there are examples of non-word-representable graphs whose line graphs are non-word-representable. However, it is an open problem to determine whether the line graph of a non-word-representable graph is always non-word-representable or not? In this work, we address the open problem by considering a class of non-word-representable graphs, viz., Mycielski graphs of odd cycles of length at least five, and show that their line graphs are word-representable.	
\end{abstract} 

\keywords{Word-representable graph, line graph, semi-transitive orientation, Mycielski graph.}

\section{Introduction}

A word $w$ over a finite set $X$ is a finite sequence of elements (letters) from the set $X$. A word is called $k$-uniform if each letter appears exactly $k$ times in the word.  A subword $u$ of $w$, denoted by $u \ll w$, is a subsequence of $w$. For $Y \subseteq X$, the subword of $w$ restricted to the elements of $Y$ is denoted by $w|_{Y}$. For instance, if $u = abdcbadac$, then $u|_{\{a, b, c\}} = abcbaac$. We say two letters $a$ and $b$ alternate in a word $w$ if $w|_{{\{a, b\}}}$ is of the form $abab\cdots$ or $baba\cdots$, which is of either even or odd length.

In this paper, we only consider the simple graphs, the graphs without loops or parallel edges. To distinguish between two-letter words and edges of graphs, we write $\overline{ab}$ to denote an undirected edge between the vertices $a$ and $b$. A directed edge from $a$ to $b$ is denoted by $\overrightarrow{ab}$.   An orientation of a graph is an assignment of direction to each edge so that the resulting graph is a directed graph. A directed graph on the vertex set $\{a_1,a_2,\ldots, a_n\}$, where $n \ge 4$, is called a shortcut if it contains a directed path, say $\langle a_1,a_2,\ldots, a_n\rangle$, and the edge $\overrightarrow{a_1a_n}$, called shortcutting edge, such that there is no edge $\overrightarrow{a_ia_j}$ for some $i, j$ with $ 1\le i<j \le n$. An orientation of a graph is said to be semi-transitive if the corresponding directed graph has no directed cycles and no shortcuts. 

A graph $G = (V, E)$ is called a word-representable graph if there exists a word $w$ over its vertex set $V$ such that, for all $a, b \in V$, $\overline{ab} \in E$ if and only if $a$ and $b$ alternate in $w$. It is evident that if $w$ represents a graph $G$, then its reversal also represents $G$. In \cite{kitaev08}, it was shown that every word-representable graph is represented by a  $k$-uniform word, for some $k$. Further, it was proved that if a $k$-uniform word $w$ represents $G$, then its cyclic shift\footnote{A cyclic shift of a word $w$ of the form $uv$ is $vu$ for some words $u$ and $v$.} also represents $G$. Since their introduction in \cite{kitaev08order}, word-representable graphs have been extensively studied. In fact, this notion generalizes several important graph classes including circle graphs, 3-colorable graphs, cover graphs, and comparability graphs.

From a computational perspective, word-representable graphs are interesting. For example, the maximum clique problem, which is NP-hard in general, can be solved in polynomial time for word-representable graphs. Also, determining whether a given graph is word-representable is NP-complete. Nonetheless, in \cite{halldorsson16},  word-representable graphs were characterized as the graphs which admit semi-transitive orientations. There is a long line of research in the literature where semi-transitive orientation has been used as a powerful tool for studying the word-representability of graphs, such as polyomino triangulations \cite{Akrobotu_2015}, (extended) Mycielski graphs \cite{hameed_2024,kitaev_mycielski}, split graphs \cite{split_graphs_2024}, triangle-free graphs \cite{triangle-free_2023} and  Kneser graphs \cite{kneser_graphs_2020}.

The line graph of a graph $G = (V,E)$, denoted by $L(G)$,  is a graph with $E$ as the vertex set and two vertices in $L(G)$ are adjacent if and only if their corresponding edges share a vertex in $G$. For $k \ge 1$, we write $L^{k+1}(G) = L(L^{k}(G))$, where $L^1(G) = L(G)$. In \cite{kitaev_linegraphs,kitaev_line2}, various problems were posed on the word-representability of line graphs. In this work, we address one of the long standing open problems and establish that line graphs of non-word-representable graphs are not always non-word-representable.

\section{Remarks on Open Problems}  

It was established in \cite{kitaev_linegraphs,kitaev_line2} that the line graph of the complete graph $K_4$ is word-representable, but for $n \ge 5$, the line graph of a complete graph $K_n$ is non-word-representable. 
 
\begin{figure}
	\centering
	\begin{minipage}[b]{.5\textwidth}
		\centering
		\begin{tikzpicture}[scale=0.8]
			\vertex (1) at (0,0) [label=below:$4$] {};  
			\vertex (2) at (-1.3,-1) [label=left:$2$] {}; 
			\vertex (3) at (1.3,-1) [label=right:$3$] {}; 
			\vertex (4) at (0,1.2) [label=above:$1$] {}; 		
			\vertex (5) at (1,1.5) [label=above:$y$] {}; 		
			
			\path
			(1) edge (2)
			(1) edge (3)
			(1) edge (4)
			(2) edge (3)
			(2) edge (4)
			(3) edge (4)
			(5) edge (4);
		\end{tikzpicture}	
		\caption{$K_4'$}
		\label{k4'}	
	\end{minipage}
	\begin{minipage}[b]{.4\textwidth}
		\centering
		\begin{tikzpicture}[scale=0.8]
			\vertex (1) at (0,1.2) [label=above:$a_{23}$] {};  
			\vertex (2) at (-0.6 ,0.1) [label=left:$a_{12}$] {}; 
			\vertex (3) at (0.6,0.1) [label=right:$a_{13}$] {}; 
			\vertex (4) at (0,-0.8) [label=below:$a_{14}$] {}; 	
			\vertex (5) at (-1.8,-1) [label=below:$a_{24}$] {};  
			\vertex (6) at (1.8,-1) [label=right:$a_{34}$] {}; 
			\vertex (7) at (0,-0.4) [label=above:$a_{1y}$] {}; 
			
			\path
			(1) edge (2)
			(1) edge (3)
			(2) edge (3)
			(2) edge (5)
			(2) edge (4)
			(3) edge (6)
			(3) edge (4)
			(5) edge (4)
			(4) edge (6)
			(2) edge (7)
			(3) edge (7)
			(4) edge (7)
			(1) edge  [bend right] (5)
			(5) edge  [bend right] (6)
			(1) edge  [bend left] (6) ;
		\end{tikzpicture}
		\caption{$L(K_4')$}
		\label{Lk4'}
	\end{minipage}
\end{figure}

Consider the graph $K_4'$ given in Fig. \ref{k4'}. Note that the line graph $L(K_4')$ is as per Fig. \ref{Lk4'}, in which the vertices $a_{ij}$ are corresponding to the edges $\overline{ij}$ in $K_4'$. It can be observed that $L(K_4')$ is isomorphic the graph co-$(T_2)$, which is non-word-representable by \cite[Theorem 2]{halldorsson10}. Hence, we have the following lemma.  

\begin{lemma}\label{K_4'_wr}
	The graph $L(K_4')$ is non-word-representable.	
\end{lemma}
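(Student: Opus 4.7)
The plan is to establish the lemma by exhibiting an explicit graph isomorphism between $L(K_4')$ and co-$(T_2)$, where $T_2$ denotes the graph from \cite{halldorsson10} whose complement is shown to be non-word-representable. Since word-representability is preserved under isomorphism, this reduces the claim to a routine verification, and no semi-transitive orientation analysis on $L(K_4')$ itself will be required.

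To set this up, I would compute the complement $\overline{L(K_4')}$ directly from the definition of a line graph: two vertices $a_{ij}$ and $a_{k\ell}$ are adjacent in $\overline{L(K_4')}$ precisely when the edges $\overline{ij}$ and $\overline{k\ell}$ of $K_4'$ share no endpoint. Reading off the vertex-disjoint pairs from Fig.~\ref{k4'}, one finds exactly six such pairs: $\{\overline{12},\overline{34}\}$, $\{\overline{13},\overline{24}\}$, $\{\overline{14},\overline{23}\}$, together with $\{\overline{1y},\overline{23}\}$, $\{\overline{1y},\overline{24}\}$, and $\{\overline{1y},\overline{34}\}$. Drawing these six edges reveals that $\overline{L(K_4')}$ is a spider (tripod) with centre $a_{1y}$ and three legs of length two, namely $a_{1y}$–$a_{23}$–$a_{14}$, $a_{1y}$–$a_{24}$–$a_{13}$, and $a_{1y}$–$a_{34}$–$a_{12}$. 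A quick count (for example, via the degree sequence $(5,5,5,4,4,4,3)$ of $L(K_4')$) confirms that $\overline{L(K_4')}$ has exactly this structure and no further edges.

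The only remaining step is the bookkeeping identification of the above spider with the graph $T_2$ of \cite{halldorsson10}; once the vertex labels are matched, the isomorphism $L(K_4')\cong\text{co-}(T_2)$ is immediate and \cite[Theorem 2]{halldorsson10} delivers the conclusion. This label-matching is essentially the only obstacle, and it is purely combinatorial. As a backup, one could replace the reference-based argument with a direct semi-transitive orientation argument on $L(K_4')$, case-analysing the possible orientations of the triangle on $\{a_{12},a_{13},a_{14}\}$ together with its common neighbour $a_{1y}$ and showing that every such orientation either creates a directed cycle or a shortcut; the isomorphism route is, however, considerably cleaner.
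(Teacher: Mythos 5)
Your proposal is correct and follows exactly the paper's route: the paper likewise observes that $L(K_4')$ is isomorphic to co-$(T_2)$ and invokes \cite[Theorem 2]{halldorsson10}, offering no further detail. Your explicit computation of the complement (the spider with centre $a_{1y}$ and three legs of length two, i.e.\ $T_2$) simply fills in the verification the paper leaves to the reader, and it checks out.
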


Note that if $H$ is a subgraph of $G$, then $L(H)$ is an induced subgraph of $L(G)$. By hereditary property of word-representable graphs, all induced subgraphs of a word-representable graph are word-representable \cite{kitaev15mono}. From Lemma \ref{K_4'_wr}, we deduce that if $G$ contains $K_4'$ as a subgraph, then its line graph $L(G)$ is non-word-representable. In particular, since a complete graph $K_n$, for $n \ge 5$, contains $K_4'$ as a subgraph, the line graph $L(K_n)$ is non-word-representable. Thus, we have a shorter and alternative proof for the word-representability of line graphs of complete graphs given in \cite{kitaev_line2}.

\begin{remark}\label{con_1}
If $G$ is a connected graph on at least five vertices and contains $K_4$ as an induced subgraph then $L(G)$ is non-word-representable.
\end{remark}

Furthermore, it was shown in  \cite{kitaev_linegraphs,kitaev_line2} that if a graph $G$ is neither a cycle, a path nor the complete bipartite graph $K_{1,3}$, then the line graph $L^k(G)$ is non-word-representable, for $n \ge 4$. Let $G$ be a connected graph on six vertices contains vertex of degree at least four. Note that the complete bipartite graph $K_{1,4}$ is a proper subgraph of $G$ and hence $K_4$ is a proper subgraph of $L(G)$. Hence, by Lemma \ref{K_4'_wr}, we conclude  that $L^k(G)$ is non-word-representable, for $k\ge 2$.

In this paper, we address the following open problem posed in \cite{kitaev_linegraphs,kitaev_line2}.

\vskip 0.2cm

\noindent \textbf{Problem 1.} \textit{Is the line graph of a non-word-representable graph always non-word-representable?}
\vskip 0.2cm

Problem 1 was highlighted at multiple contexts (e.g., see \cite{kitaev_dlt_2017,kitaev_survey}) and there have been attempts to address the problem. For instance, in \cite{akbar_line,Akrobotu_thesis}, a negative answer to Problem 1 was attempted by giving the graph $A$ depicted in Fig. \ref{A} as counterexample. However, we show that the line graph of $A$ is non-word-representable. As a result, the problem remains open. Nevertheless, we settle the problem by showing that the line graph of the Mycielski graph of odd cycle $\mu(C_{2n+1})$, for $n \ge 2$, is  word-representable.

In \cite{akbar_line}, the non-word-representable graph $A$ given in Fig. \ref{A} (cf. \cite[Figure 3.9]{kitaev15mono}) was considered and shown that $L(A)$ is word-representable. Note that the graph $A$ contains $W_5'$ given in Fig. \ref{Wn'} as a subgraph, and thus $L(A)$ contains $L(W_5')$ as an induced subgraph. As word-representable graphs are hereditary, $L(W_5')$ must be word-representable if $L(A)$ is word-representable. However, we observe that $L(W_5')$ is non-word-representable showing that the graph in Fig. \ref{A} is not a counterexample to Problem 1.

\begin{figure}
	\centering
	\begin{minipage}[b]{.3\textwidth}
		\centering
		\begin{tikzpicture}[scale=1]
			\vertex (n+1) at (0,0) [label=below:$ $] {};  
			\vertex (3) at (-0.8,-1) [label=left:$ $] {}; 
			\vertex (4) at (0.8,-1) [label=right:$ $] {}; 
			\vertex (1) at (0,1.2) [label=above:$ $] {}; 	
			\vertex (2) at (-1.3,0.3) [label=left:$ $] {}; 
			\vertex (n) at (1.3, 0.3) [label=right:$ $] {}; 
			\vertex (0) at (0, -0.6) [label=right:$ $] {}; 
			\path
			(1) edge (2)
			(2) edge (3)
			(3) edge (4)
			(1) edge (n)
			(1) edge (n+1)
			
			(3) edge (n+1)
			(4) edge (n+1)
			
			(4) edge (n)
			(2) edge (0)
			(3) edge (0)
			(4) edge (0)
			(n) edge (0)
			;
		\end{tikzpicture}	
		\caption{$A$}
		\label{A}	
	\end{minipage}
	\begin{minipage}[b]{.3\textwidth}
		\centering
		\begin{tikzpicture}[scale=1]
			\vertex (n+1) at (-5,-0.5) [label=right:$  $] {};  
			\vertex (3) at (-5.8,-1.5) [label=left:$  $] {}; 
			\vertex (4) at (-4.3,-1.5) [label=right:$  $] {}; 
			\vertex (1) at (-5,0.7) [label=above:$  $] {}; 	
			\vertex (2) at (-6.3,-0.2) [label=left:$  $] {}; 
			\vertex (n) at (-3.7,-0.2) [label=right:$  $] {}; 
			\path
			(1) edge node[above] {$e_2$}  (2)
			(2) edge node[left] {$e_3$} (3)
			(3) edge  node[below] {$e_4$} (4)
			(1) edge node[above] {$e_1$}(n)
			(2) edge node[below] {$b_2$} (n+1)
			(3) edge node[right] {$b_3$}(n+1)
			(4) edge node[right] {$b_4$}(n+1)
			(1) edge node[left] {$b_1$} (n+1)
			(4) edge node[right] {$e_5$} (n)
			;
		\end{tikzpicture}	
		\caption{$W_5'$}
		\label{Wn'}	
	\end{minipage}
	\begin{minipage}[b]{.3\textwidth}
		\centering
		\begin{tikzpicture}[scale=0.9]
			\vertex (27) at (0,0) [label=above:$b_1$] {};  
			\vertex (37) at (1,0) [label=above:$b_2 $] {}; 
			\vertex (47) at (0,-1) [label=below :$b_4$] {}; 
			\vertex (57) at (1,-1) [label=below :$b_3$] {}; 
			
			\vertex (12) at (-1.1,0) [label=above left:$e_1$] {}; 	
			\vertex (23) at (0.5,1.1) [label=above:$e_2$] {}; 
			\vertex (34) at (2.1,-0.5) 
			[label=right:$e_3 $] {}; 
			\vertex (45) at (0.5, -2.1) 
			[label=below:$e_4 $] {}; 
			\vertex (15) at (-1.1, -1) [label=left:$e_5 $] {}; 
			
			\path
			(27) edge (37)
			(27) edge (37)
			(27) edge (47)
			(27) edge (57)
			(37) edge (47)
			(37) edge (57)	 
			(47) edge (57);
			
			\path 
			(27) edge (12)
			(27) edge (23)
			(37) edge (23)
			(37) edge (34)
			(57) edge (34)
			(47) edge (45)
			(57) edge (45)
			(15) edge (47);
			
			\path 
			(12) edge (23)
			(23) edge (34)
			(34) edge (45)
			(45) edge (15)
			(12) edge (15);
		\end{tikzpicture}	
		\caption{$L(W_5')$}
		\label{L(Wn')}	
	\end{minipage}			
\end{figure}

\begin{notation}
	In this, we assume a word $w$ is cyclic, i.e., $w$ is written on a circle.
	\begin{enumerate}
		\item 	The statement \index{$\exists (abca)$} $\exists (abca)$ is true for a word $w$ denotes that there is at least two consecutive occurrence of $a$ in the word $w$ such that the vertices $b$ and $c$ occur  between them and $b$ appears on the left side of $c$. 
		\item The statement \index{$\forall (abca)$} $\forall (abca)$ is true for a word $w$ denotes that between every two consecutive occurrences of the vertex $a$ in $w$, the vertices $b$ and $c$ occur between them and $b$ always appears on the left side of $c$. 
	\end{enumerate}
\end{notation}

The following two results are needed to prove that the graph $L(W_5')$ is non-word-representable.

\begin{proposition}[\cite{kitaev_line2,kitaev_linegraphs}] \label{prop_1}
	Let $w$ represents a graph $G = (V,E)$, and $a,b,c \in V$ and $\overline{ab}, \overline{ac} \in E$. Then
	\begin{itemize}
		\item if $\overline{bc} \not \in E$ then both the statements $\exists (abca)$ and $\exists (acba)$ are true for w, while
		\item  if $\overline{bc} \in E$ then exactly one of the statement $\forall (abca)$ and $\forall (acba)$ is true for w.
	\end{itemize}
\end{proposition}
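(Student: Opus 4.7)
The plan is to reduce the whole statement to a local analysis of the intervals between consecutive occurrences of $a$ in the cyclic word $w$. Since $\overline{ab}\in E$, the letters $a$ and $b$ alternate in $w$; cyclically, this is equivalent to saying that between every two consecutive occurrences of $a$ there is exactly one $b$. The same holds for $c$ because $\overline{ac}\in E$. So every ``$a$-window'' (the cyclic interval between two consecutive copies of $a$) contains exactly one $b$ and exactly one $c$, and therefore has one of two possible types: either $b$ precedes $c$ in the window (an ``$abca$-window'') or $c$ precedes $b$ (an ``$acba$-window''). The rest of the proof is a case analysis on whether $\overline{bc}$ is an edge, driven by how the types of the various windows concatenate to form $w|_{\{b,c\}}$.

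For the first bullet, assume $\overline{bc}\notin E$. If every $a$-window were of type $abca$, then reading the windows in order and restricting to $\{b,c\}$ would produce the cyclic sequence $bcbcbc\cdots$, forcing $b$ and $c$ to alternate and hence $\overline{bc}\in E$, a contradiction. By the symmetric argument, not every window can be of type $acba$ either. Hence at least one window of each type exists, which is precisely $\exists(abca)$ and $\exists(acba)$. For the second bullet, assume $\overline{bc}\in E$ and suppose for contradiction that two $a$-windows have opposite types; then moving from one to the other along $w$ we encounter the pattern $\cdots b \cdots c \cdots c \cdots b \cdots$ in $w|_{\{b,c\}}$, producing two consecutive equal letters and destroying the alternation forced by $\overline{bc}\in E$. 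Therefore all windows share a common type, so exactly one of $\forall(abca)$ and $\forall(acba)$ holds (they are mutually exclusive whenever $a$ occurs at least once, which we may assume, since otherwise the adjacencies $\overline{ab},\overline{ac}$ contradict the representability of $w$).

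The only non-obvious step is the translation between the global alternation condition ``$a$ and $b$ alternate in $w$'' and the local condition ``each $a$-window contains exactly one $b$''; I expect this to be the main (and essentially the only) technical point, and it follows immediately from the definition of alternation once one adopts the cyclic viewpoint introduced in the notation. The remainder is a clean case analysis of how the sequence of window types builds $w|_{\{b,c\}}$, so no delicate combinatorics is needed beyond this translation.
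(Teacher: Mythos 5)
The paper does not prove this proposition: it is imported verbatim from \cite{kitaev_line2,kitaev_linegraphs} and used as a black box, so there is no in-paper argument to compare yours against. On its own merits, your window-based proof is correct in the setting where the proposition is actually applied, namely for a $k$-uniform word $w$ (the proof of Theorem~\ref{non_w'} takes $w$ to be $k$-uniform, and uniformity is also what makes the cyclic viewpoint of the Notation well behaved). The one step you should state more carefully is exactly the one you flag: ``$a$ and $b$ alternate in $w$'' is equivalent to ``every cyclic $a$-window contains exactly one $b$'' only when $a$ and $b$ occur equally often --- for instance $w|_{\{a,b\}}=aba$ alternates linearly but has a cyclic window containing no $b$ --- so the reduction to windows needs the uniformity (or at least equal-multiplicity) hypothesis made explicit. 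Granting that, the rest is sound: if all windows have type $bc$ then the cyclic restriction $w|_{\{b,c\}}$ is $(bc)^k$, every linearization of which alternates, forcing $\overline{bc}\in E$; and if not all windows share a type then some two cyclically adjacent windows differ, producing a repeated letter in $w|_{\{b,c\}}$ that no choice of linearization point can remove when $b$ and $c$ have equal counts, forcing $\overline{bc}\notin E$. Together with the observation that a window with exactly one $b$ and one $c$ cannot be of both types, this gives both bullets.
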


\begin{lemma}[\cite{kneser_graphs_2020}]\label{cycle}
	Suppose that the vertices in $\{a, b, c, d\}$ induce a subgraph $S$ in a partially oriented graph such that $\overrightarrow{ab}$ and
	$\overrightarrow{bc}$ are edges, $\overline{cd}$ and $\overline{da}$ are non-oriented edges, and $S$ is different from the complete graph $K_4$. Then, the unique way to
	orient $\overline{cd}$ and $\overline{da}$ in order not to create a directed cycle or a shortcut is $\overrightarrow{ad}$ and $\overrightarrow{dc}$.
\end{lemma}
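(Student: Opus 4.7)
The proof plan is a direct case analysis on the four possible orientations of the two undirected edges $\overline{cd}$ and $\overline{da}$, showing that three of the four choices immediately produce a directed cycle or a shortcut inside $S$, leaving $\overrightarrow{ad}$ and $\overrightarrow{dc}$ as the only feasible option.

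First I would enumerate the four cases. Case 1: orienting as $\overrightarrow{cd}$ and $\overrightarrow{da}$ closes the directed cycle $a \to b \to c \to d \to a$ together with $\overrightarrow{ab}$ and $\overrightarrow{bc}$, which is forbidden. Case 2: orienting as $\overrightarrow{cd}$ and $\overrightarrow{ad}$ yields the directed path $\langle a, b, c, d\rangle$ together with the shortcutting edge $\overrightarrow{ad}$; because $S$ is not $K_4$, at least one of the edges $\overline{ac}$ or $\overline{bd}$ is absent from the underlying graph, so the corresponding forward arc $\overrightarrow{ac}$ or $\overrightarrow{bd}$ cannot appear in the oriented graph, witnessing a shortcut. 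Case 3: orienting as $\overrightarrow{dc}$ and $\overrightarrow{da}$ is symmetric to Case 2 with directed path $\langle d, a, b, c\rangle$ and shortcutting edge $\overrightarrow{dc}$; again the absence of $\overline{ac}$ or $\overline{bd}$ in $S$ forces a shortcut. Case 4: orienting as $\overrightarrow{ad}$ and $\overrightarrow{dc}$ leaves only the directed paths $a \to b \to c$ and $a \to d \to c$ inside $S$, both of length two, so no directed path on four vertices exists within $S$ and no directed cycle is formed; hence neither a cycle nor a shortcut is created.

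The main obstacle, modest as it is, lies in Cases 2 and 3: one must carefully invoke the hypothesis $S \neq K_4$ to certify that some pair among the path's vertices lacks the forward arc required to defeat the shortcut definition. Once the translation from ``missing edge in $S$'' to ``missing forward arc in the oriented graph'' is made explicit, the argument is immediate, and Cases 1 and 4 are settled by directly reading off the arc structure.
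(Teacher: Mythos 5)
Your case analysis is correct, and it is essentially the standard proof of this fact; note that the paper itself does not prove this lemma but imports it from the cited reference on Kneser graphs, where the argument is exactly your Cases 1--3 (cycle $a\to b\to c\to d\to a$, and the two shortcuts $\langle a,b,c,d\rangle$ with $\overrightarrow{ad}$ and $\langle d,a,b,c\rangle$ with $\overrightarrow{dc}$, each certified by the diagonal missing because $S\neq K_4$). One small inaccuracy in your Case 4: if a diagonal such as $\overline{bd}$ is present and oriented, a directed path on all four vertices (e.g.\ $a\to b\to d\to c$) can exist inside $S$, so it is not literally true that only the two length-two paths remain; the correct observation is that any such path would need the shortcutting edge $\overrightarrow{ac}$ (resp.\ $\overrightarrow{db}$), whose underlying edge is absent precisely because $S\neq K_4$, so no shortcut or cycle arises. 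This does not affect the uniqueness claim, which already follows from Cases 1--3.
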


\begin{theorem}\label{non_w'}
	The line graph $L(W_5')$ is non-word-representable.
\end{theorem}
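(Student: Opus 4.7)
My plan is to show that $L(W_5')$ admits no semi-transitive orientation, which by the characterization of \cite{halldorsson16} yields non-word-representability. Suppose, for a contradiction, that such an orientation exists. Since any acyclic tournament is transitive, the induced $K_4$ on $\{b_1, b_2, b_3, b_4\}$ must be oriented as a transitive tournament, yielding a linear order on these four vertices. The reflective symmetry of $W_5'$ that swaps $1 \leftrightarrow 4$ and $2 \leftrightarrow 3$ lifts to an automorphism of $L(W_5')$ that swaps $b_1 \leftrightarrow b_4$, $b_2 \leftrightarrow b_3$, $e_1 \leftrightarrow e_5$, $e_2 \leftrightarrow e_4$ while fixing $e_3$; combined with orientation reversal, this reduces the $24$ possible $K_4$-orders to a handful of representatives.

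For each representative I would invoke Lemma \ref{cycle} repeatedly on $4$-vertex induced subgraphs of $L(W_5')$ that are not $K_4$. Two induced $4$-cycles that will be helpful are those with vertex sequences $b_1, e_2, e_3, b_3$ and $b_2, e_3, e_4, b_4$; two useful ``$K_4$ minus one edge'' subgraphs are those on $\{b_1, b_2, e_2, e_3\}$ (missing $b_1 e_3$) and $\{b_3, b_4, e_3, e_4\}$ (missing $b_4 e_3$). The $K_4$-order already commits several directed edges among the $b_i$'s, so the hypotheses of Lemma \ref{cycle} are met and its conclusion propagates orientations onto the edges involving the $e_i$'s. In parallel, I would use the fact that each of the triangles $b_i b_{i+1} e_{i+1}$ (for $i = 1, 2, 3$) and $e_i e_{i+1} b_?$ (for $i = 1, 2, 3, 4$) must be transitively oriented, which further restricts the options at each step.

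After full propagation I expect to expose, in every case, either a directed cycle or a shortcut. A typical shortcut takes the form of a directed path of length at least three running through the $5$-cycle together with a $b$-vertex, equipped with a shortcutting edge coming from the $K_4$, and missing some intermediate forward edge precisely because $e_1$ or $e_5$ is adjacent to only one of the $b_i$'s. The main obstacle is the branching case analysis: even after symmetry reduction several inequivalent $K_4$-orders remain, and each must be traced to a contradiction; I anticipate the trickiest subcases to be those in which $b_2$ or $b_3$ is extremal in the order, since the forced orientations then break the left-right symmetry of the $5$-cycle and the contradiction surfaces only after longer propagation.
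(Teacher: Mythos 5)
Your overall engine---force a transitive tournament on the clique $\{b_1,b_2,b_3,b_4\}$, propagate orientations onto the $e_i$'s via Lemma \ref{cycle} and transitivity of triangles, and corner the pentagon $e_1e_2e_3e_4e_5$ into a directed cycle or a shortcut---is exactly the machinery of the second half of the paper's proof, and your inventory of useful induced subgraphs (the diamonds on $\{b_1,b_2,e_2,e_3\}$ and $\{b_3,b_4,e_3,e_4\}$, the triangles $b_ib_{i+1}e_{i+1}$) is accurate. Where you diverge is in how the starting data is obtained. The paper does not begin with an arbitrary semi-transitive orientation: it takes a $k$-uniform representing word, uses Proposition \ref{prop_1} (applied with $e_2$ and $e_3$) to show that $w|_{\{b_1,b_2,b_3,b_4\}}$ must be $(b_1b_2b_3b_4)^k$ up to reversal, and then passes to the orientation by first occurrences. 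Because $b_1$ can be taken as the leftmost letter, $b_1$ is a \emph{source} of that orientation, which hands over $\overrightarrow{b_1e_1}$ and $\overrightarrow{b_1e_2}$ for free; from there every subsequent edge orientation is forced and the argument is a single deterministic chain per subcase.

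There are two genuine gaps in your version. First, the decisive step is only promised: ``after full propagation I expect to expose \dots a directed cycle or a shortcut'' is the entire content of the theorem and cannot be left as an expectation. Second, your claim that ``the $K_4$-order already commits several directed edges among the $b_i$'s, so the hypotheses of Lemma \ref{cycle} are met'' is false for the subgraphs you name: each of the induced $4$-cycles $b_1e_2e_3b_3$ and $b_2e_3e_4b_4$ contains exactly one $b$--$b$ edge ($\overline{b_1b_3}$, resp.\ $\overline{b_2b_4}$), and each diamond contains only $\overline{b_1b_2}$, resp.\ $\overline{b_3b_4}$, whereas Lemma \ref{cycle} requires a directed path of \emph{two} already-oriented consecutive edges. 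So before the lemma can bite you must branch on edges such as $\overline{b_1e_2}$ or $\overline{b_3e_3}$, for which neither direction is excluded at the outset; combined with the fact that your symmetry group (reflection times orientation reversal) only cuts the $24$ linear orders to $6$ representatives and gives you no source vertex, the true case tree is considerably larger than ``a handful.'' The plan can very likely be completed, but as written it is a strategy rather than a proof, and the paper's word-theoretic preprocessing is precisely what eliminates the branching you would otherwise face.
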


\begin{proof}
	Suppose $L(W_5')$ is word-representable and $w$ is a $k$-uniform word that represents $L(W_5')$. Since any cyclic shift of a word representing a graph also represents the same graph (cf. \cite[Proposition 5]{kitaev08}), assume that the leftmost letter of $w$ is $b_1$. Since $\{b_1,b_2,b_3,b_4\}$ induces a clique in $L(W_5')$, $w|_{\{b_1,b_2,b_3,b_4\}}$ is of the form $u^k$ (by \cite[Proposition 3]{kitaev_linegraphs}), where $u = b_1 b_{i_2} b_{i_3} b_{i_{4}}$ such that $i_2,i_3, i_4 \in \{2,3,4\}$. In the following, we show that either $u = b_1b_2b_3 b_4$ or $u = b_1 b_4 b_3 b_2$.
	
	Suppose $i_2,i_4 \in \{3,4\}$ so that $u = b_1 b_{i_2} b_2 b_{i_4}$. Applying Proposition \ref{prop_1} on the vertices $b_1, b_{i_2}$ and $b_{2}$, the statement $\forall (b_1 b_{i_2} b_{2} b_1)$ is true for $w$. Similarly, applying Proposition \ref{prop_1} on the vertices $b_1, b_{2}$ and $b_{i_4}$, the statement $\forall (b_1 b_{2} b_{i_4} b_1)$ is true for $w$.  As the vertex $e_2$ is non-adjacent to $b_{i_2}$  but adjacent to $b_1$,  the statements $\exists (b_1 e_2 b_{i_2} b_1 )$ and $\exists (b_1 b_{i_2} e_2 b_1)$ are true for $w$. Also, since the vertex $e_2$ is non-adjacent to $b_{i_4}$, the statements $\exists (b_1 e_2 b_{i_4} b_1 )$ and $\exists (b_1 b_{i_4} e_2 b_1)$ are true for $w$. From the statements $\exists (b_1 e_2 b_{i_2} b_1 )$ and $\forall(b_1 b_{i_2} b_2 b_1)$, we get $\exists (b_1 e_2 b_2 b_1)$. From the statements $\exists (b_1 b_{i_4} e_2 b_1)$ and $\forall (b_1 b_2 b_{i_4} b_1)$, we get $\exists (b_1  b_2 e_2 b_1)$. Thus, we can conclude that $\exists (b_1 e_2 b_2 b_1)$ and $\exists (b_1 b_2 e_2 b_1)$ are true for $w$. This is a contradiction to Proposition \ref{prop_1} when applied to the vertices $b_1, b_2$ and $e_2$. Thus, either $u = b_1 b_2 b_{i_3} b_{i_4}$ or $u = b_1b_{i_2} b_{i_3} b_2$. 
	
	Suppose $u = b_1 b_2 b_4 b_3$. Then applying Proposition \ref{prop_1} to the vertices $b_2,b_3$ and $b_4$,  the statement $\forall (b_2 b_4 b_3 b_2)$ is true for $w$. Similarly,  applying Proposition \ref{prop_1} to the vertices $b_2,b_3$ and $b_1$,  the statement $\forall (b_2 b_3 b_1 b_2)$ is true for $w$. Since the vertex $e_3$ is non-adjacent to $b_1$ but adjacent to $b_2$, the statements $\exists (b_2 e_3 b_{1} b_2 )$ and $\exists (b_2 b_{1} e_3 b_2)$ are true for $w$. Also, since the vertex $e_3$ is non-adjacent to $b_4$, 	the statements $\exists (b_2 e_3 b_{4} b_2 )$ and $\exists (b_2 b_{4} e_3 b_2)$ are true for $w$. From the statements   $\exists (b_2 b_1 e_{3} b_2 )$  and $\forall (b_2 b_3 b_1 b_2)$, we get  $\exists (b_2 b_3 e_{3} b_2 )$. From the statements  $\exists(b_2 e_3 b_{4} b_2 )$ and $\forall (b_2 b_4 b_3 b_2)$, we get $\exists (b_2 e_3 b_3 b_2 )$. Thus,  we can conclude that $\exists (b_2 e_3 b_3 b_2)$ and $\exists (b_2 b_3 e_3 b_2)$ are true for $w$. This is a contradiction to Proposition \ref{prop_1} when applied to the vertices $b_2, b_3$ and $e_3$. Thus,  $u = b_1 b_2 b_{3} b_{4}$.
	
	Suppose $u = b_1 b_3 b_4 b_2$. Then, applying Proposition \ref{prop_1} to the vertices $b_1,b_2$ and $b_3$,  the statement  $\forall (b_2 b_1 b_3 b_2)$ is true for $w$. Similarly, applying Proposition \ref{prop_1} to the vertices $b_2, b_3$ and $b_4$,  the statement  $\forall (b_2 b_3 b_4 b_2)$ is true for $w$.  Since the vertex $e_3$ is adjacent to $b_2$ but non-adjacent to $b_1$, the statements $\exists (b_2 e_3 b_{1} b_2 )$ and $\exists (b_2 b_{1} e_3 b_2)$ are true for $w$. Also, since $e_3$ is non-adjacent to $b_4$, the statements $\exists (b_2 e_3 b_{4} b_2 )$ and $\exists (b_2 b_{4} e_3 b_2)$ are true for $w$. From the statements $\exists (b_2 e_3 b_{1} b_2 )$ and $\forall (b_2 b_1 b_3 b_2)$, we get $\exists (b_2 e_3 b_3 b_2)$. From the statements $\exists (b_2 b_{4} e_3 b_2)$ and $\forall(b_2 b_3 b_4 b_2)$, we get $\exists (b_2 b_3 e_3 b_2)$. Thus,  we can conclude that $\exists (b_2 e_3 b_3 b_2)$ and $\exists (b_2 b_3 e_3 b_2)$ are true for $w$. This is a contradiction to Proposition \ref{prop_1} when applied to the vertices $b_2, b_3$ and $e_3$. Thus, $u = b_1 b_4 b_3 b_2$.
	
	Note that the second case, i.e.,  $u =b_1 b_4 b_3 b_2$, can be obtained from the first case, i.e., $u = b_1 b_2 b_3 b_4$, by reversing the word $w$ and taking a cyclic shift. Therefore, it is sufficient to prove the theorem for the first case.
	
	Let $D_0$ be the orientation of $L(W'_5)$ associated with the word $w$ such that for any two adjacent vertices $a$ and $b$ in $L(W'_5)$, $\overrightarrow{ab} \in D_0$ if and only if the first occurrence of $a$ occurs before the first occurrence of $b$ in $w$. Since $w$ represents $L(W'_5)$, in view of \cite[Theorem 3]{halldorsson16}, $D_0$ must be a semi-transitive orientation. However, we show that $D_0$ cannot be semi-transitive so that $L(W'_5)$ is non-word-representable.
	
	Since the vertex $b_1$ is the leftmost letter of $w$, $b_1$ must be a source in $D_0$. Thus, we have $\overrightarrow{b_1e_1}, \overrightarrow{b_1e_2}, \overrightarrow{b_1b_2}, \overrightarrow{b_1 b_3}$ and $\overrightarrow{b_1b_4}$. Further, since $w|_{\{b_1, b_2, b_3, b_4\}} = (b_1 b_2 b_3 b_4)^k$, we have $\overrightarrow{b_2 b_3}, \overrightarrow{b_2 b_4}, \overrightarrow{b_3 b_4}$ in $D_0$.
	
		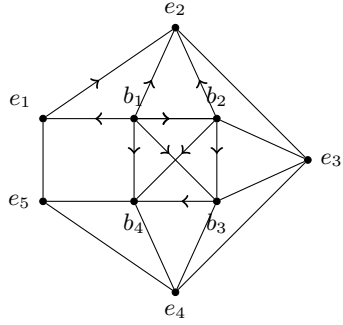
\begin{figure}[!h]
		\centering
		\begin{tikzpicture}[scale=1.1]
			
			\begin{scope}[decoration={markings,mark=at position 0.42 with {\arrow[thick]{>}}}]
				\vertex (27) at (0,0) [label=above:$b_1$] {};  
				\vertex (37) at (1,0) [label=above:$b_2 $] {}; 
				\vertex (47) at (0,-1) [label=below :$b_4$] {}; 
				\vertex (57) at (1,-1) [label=below :$b_3$] {}; 
				
				\vertex (12) at (-1.1,0) [label=above left:$e_1$] {}; 	
				\vertex (23) at (0.5,1.1) [label=above:$e_2$] {}; 
				\vertex (34) at (2.1,-0.5) 
				[label=right:$e_3 $] {}; 
				\vertex (45) at (0.5, -2.1) 
				[label=below:$e_4 $] {}; 
				\vertex (15) at (-1.1, -1) [label=left:$e_5 $] {}; 
				
				\path 
				(27) edge[postaction=decorate] (37)
				(27) edge[postaction=decorate] (37)
				(27) edge[postaction=decorate] (47)
				(27) edge[postaction=decorate] (57)
				(37) edge[postaction=decorate] (47)
				(37) edge[postaction=decorate] (57)	 
				(57) edge[postaction=decorate] (47);
				
				\path 
				(27) edge[postaction=decorate] (12)
				(27) edge[postaction=decorate] (23)
				(37) edge[postaction=decorate] (23)
				(37) edge (34)
				(57) edge (34)
				(47) edge (45)
				(57) edge (45)
				(15) edge (47);
				
				\path 
				(12) edge[postaction=decorate] (23)
				(23) edge (34)
				(34) edge (45)
				(45) edge (15)
				(12) edge (15);
			\end{scope}
		\end{tikzpicture}	
		\caption{Partial orientation of $L(W_5')$}	
		\label{fig:w5_partial}
	\end{figure}
	
	Consider the subgraph induced by $\{b_1,b_2,b_3,e_2\}$. Note that if $\overrightarrow{e_2 b_2} \in D_0$ then $\overrightarrow{b_1b_3}$ becomes the shortcutting edge of the directed path $\langle b_1,e_2,b_2,b_3 \rangle$, which is not possible as $D_0$ does not contain any shortcut. Thus,  $\overrightarrow{b_2e_2} \in D_0$. Similarly,	considering the subgraph induced by $\{b_1,b_2,e_1,e_2\}$, we must have $\overrightarrow{e_1e_2} \in D_0$. The partial orientation of $W_5'$ with respect to the semi-transitive orientation $D_0$ is shown in Fig. \ref{fig:w5_partial}.
		
	\begin{claim}
		For $2 \le i \le 4$, $\overrightarrow{e_i e_{i+1}} \in D_0$. 	
	\end{claim}
	
	Suppose $\overrightarrow{e_3e_2} \in D_0$.  Consider the subgraph induced by $\{b_1,b_2,e_2,e_3\}$. Note that if $\overrightarrow{b_2e_3} \in D_0$  then $\overrightarrow{b_1e_2}$ becomes the shortcutting edge of the directed path $\langle b_1,b_2,e_3,e_2 \rangle$, which is not possible in $D_0$ as it does not contain any shortcut. Thus, $\overrightarrow{e_3b_2} \in D_0$.
	If $\overrightarrow{b_3e_3} \in D_0$  then the vertices $\{b_2,b_3,e_3\}$ form a directed cycle, which is not possible as $D_0$ is an acyclic orientation. Thus, $\overrightarrow{e_3b_3} \in D_0$. Now applying Lemma \ref{cycle} to the vertices $\{e_3,b_3,b_4,e_4\}$, we have $\overrightarrow{e_3e_4}$ and $\overrightarrow{e_4b_4}$. If $\overrightarrow{b_3e_4} \in D_0$ then $\overrightarrow{b_1b_4}$ becomes the shortcutting edge for the directed path $\langle b_1,b_3,e_4,b_4 \rangle$, which is not possible. Thus, $\overrightarrow{e_4b_3} \in D_0$.
	Again applying Lemma \ref{cycle} to the vertices $\{e_4, b_3, b_4, e_5\}$, we have $\overrightarrow{e_4 e_5}, \overrightarrow{e_5b_4} \in D_0$. Finally, if $\overrightarrow{e_1e_5}$ then $\overrightarrow{b_1b_4}$ becomes the shortcutting edge of the directed path $\langle b_1,e_1,e_5,b_4 \rangle$. Thus, $\overrightarrow{e_5e_1} \in D_0$. But then, $\langle e_3,e_4,e_5,e_1,e_2 \rangle$ forms a shortcut with the shortcutting edge $\overrightarrow{e_3e_2}$. Hence, we have $\overrightarrow{e_2e_3} \in D_0$.

	Suppose $\overrightarrow{e_4e_3} \in D_0$. By a similar argument, considering the subgraph induced by $\{b_2,b_3,e_2,e_3\}$, we have $\overrightarrow{b_3e_3} \in D_0$. Note that if $\overrightarrow{e_3b_2} \in D_0$ then $\overrightarrow{b_1b_2}$ becomes the shortcutting edge of the directed path $\langle b_1,b_3,e_3,b_2 \rangle$, which is not possible. Thus, $\overrightarrow{b_2e_3} \in D_0$.  If $\overrightarrow{b_4e_4}$ then $\overrightarrow{b_3 e_3}$ becomes the shortcutting edge of the directed path $\langle b_3,b_4,e_4, e_3 \rangle$, which is not possible. Thus, $\overrightarrow{e_4b_4} \in D_0$. By a similar argument, considering the subgraph induced by $\{b_2,b_3,b_4,e_4\}$, we have $\overrightarrow{e_4b_3} \in D_0$.
	Applying Lemma \ref{cycle} to the vertices $\{e_4,b_3,b_4,e_5\}$, $\overrightarrow{e_4e_5}, \overrightarrow{e_5b_4} \in D_0$. Finally, if $\overrightarrow{e_5 e_1} \in D_0$ then $\overrightarrow{e_4e_3}$ becomes the shortcutting edge of the directed path $\langle e_4, e_5, e_1, e_2, e_3 \rangle$ in $D_0$, which is not possible. Otherwise, if $\overrightarrow{e_1 e_5} \in D_0$ then the edge $\overrightarrow{b_1 b_4}$ is the shortcutting edge of the directed path $\langle b_1,e_1,e_5,b_4 \rangle$, which is not possible in $D_0$. Hence, $\overrightarrow{e_3e_4} \in D_0$.
	
	Suppose $\overrightarrow{e_5e_4} \in D_0$. Then $\overrightarrow{e_1 e_5} \in D_0$; otherwise if $\overrightarrow{e_5 e_1}$ then $\overrightarrow{e_5 e_4}$ becomes the shortcutting edge of the directed path $\langle e_5, e_1,e_2,e_3,e_4\rangle$, which is not possible in $D_0$. In the subgraph induced by the vertices $\{b_1,b_4,e_1,e_5\}$, it is clear that $\overrightarrow{b_4e_5} \in D_0$, as the directed edge $\overrightarrow{e_5b_4}$  will create a shortcut $\langle b_1,e_1,e_5,b_4 \rangle$. As $\overrightarrow{b_4e_5} \in D_0$, if $\overrightarrow{b_3e_4} \in D_0$ then $\overrightarrow{b_3b_4}$ becomes the shortcutting edge of the directed path $\langle b_3, b_4, e_5, e_4 \rangle$ in $D_0$, which is not possible. If $\overrightarrow{e_4b_3} \in D_0$ then the vertices $\{b_3,b_4,e_5,e_4\}$ form a directed cycle, which is also not possible in $D_0$. Since any orientation of the edge $\overline{b_3 e_4}$ leads to a shortcut or a directed cycle, we arrive at a contradiction. Thus, $\overrightarrow{e_4e_5} \in D_0$. 
		
	As $\overrightarrow{e_i e_{i+1}} \in D_0$, for all $1 \le i \le 4$. When $\overrightarrow{e_1 e_5} \in D_0$ then $\overrightarrow{e_1e_5}$ is the shortcutting edge of the directed path $\langle e_1,e_2,e_3,e_4,e_5 \rangle$. When $\overrightarrow{e_5 e_1} \in D_0$, then the vertices $\{e_1,e_2,e_3,e_4,e_5\}$ forms a directed cycle in $D_0$. 
	Thus, $D_0$ cannot be a semi-transitive orientation  and hence $L(W_5')$ is non-word-representable.
	\qed	 
\end{proof}

\section{Mycielski Graph of Odd Cycle}

Let $G$ be a graph with vertex set $[n] = \{1, 2, \ldots, n\}$. The Mycielski graph of $G$, denoted by $\mu(G)$, is an undirected graph with the vertex set $[n] \cup \{1', 2', \ldots, n'\} \cup \{0\}$ and the edge set $E(G) \cup \{\overline{0i'}\ | \ i \in [n]\} \cup \{\overline{j i'}, \overline{i j'}\ | \ \overline{ij} \in E(G)\}$, where $E(G)$ is the edge set of $G$. For example, the Mycielski graph of an odd cycle $C_{2n+1}$ is shown in Fig. \ref{muc_2n+1}.

In \cite[Theorem 4]{kitaev_mycielski}, for $n \ge 1$, it was shown that $\mu(C_{2n+1})$ is non-word-representable. Note that $\mu(C_3)$ contains $W_5'$ as a subgraph (see Fig. \ref{muc_3}), and hence, by Theorem \ref{non_w'}, $L(\mu(C_3))$ is non-word-representable.  

\begin{remark}\label{con_2}
	Note that a graph $G$ is always an induced subgraph of its Mycielski graph. Thus, if $G$ contains $C_3$ as a subgraph then $L(\mu(G))$ is non-word-representable. 
\end{remark}

In the following, for $n \ge 2$, we show that the line graph of $\mu(C_{2n+1})$ is word-representable by giving a semi-transitive orientation to $L(\mu(C_{2n+1}))$.

\begin{figure}[h]
	\centering
	\begin{minipage}[b]{.5\textwidth}
		\centering
		\begin{tikzpicture}[scale=0.8]
			\vertex (1) at (0,0) [label= left:$1$] {};  
			\vertex (2) at (1,0) [label=above:$2$] {}; 
			\vertex (3) at (2,0) [label=above:$  $] {}; 
			\vertex (4) at (6,0) [label=right:$ $] {}; 	
			\vertex (5) at (7,0) [label=right:$2n+1$] {}; 
			\vertex (11) at (0,-1) [label=  left:$1'$] {};  
			\vertex (12) at (1,-1) [label=below:$2'$] {}; 
			\vertex (13) at (2,-1) [label=below:$ $] {}; 
			\vertex (14) at (6,-1) [label=right:$  $] {}; 	
			\vertex (15) at (7,-1) [label=right:$(2n+1)'$] {}; 
			\vertex (0) at (3.5,-2.7) [label=below:$0$] {}; 
			\node (A) at (2,-2.9) [label=below:$ $] {}; 
			
			\vertex (6) at (3,0) [label=right:$  $] {}; 	
			\vertex (7) at (4,0) [label=right:$ $] {}; 
			\vertex (8) at (5,0) [label=right:$  $] {};

			\vertex (16) at (3,-1) [label=left:$ \cdots  $] {}; 	
			\vertex (17) at (4,-1) [label=right:$ $] {}; 
			\vertex (18) at (5,-1) [label=right:$ \cdots $] {}; 	
			\path
			(1) edge (2)
			(2) edge (3)
			(3) edge[dashed](4)
			(4) edge  (5)
			(1) edge[bend left=40] (5)
			(1) edge (12)
			(1) edge (15)
			(2) edge (11)
			(2) edge (13)
			(6) edge (7)
			(7) edge (8)
			(6) edge (17)
			(17) edge (8)
			(16) edge (7)
			(7) edge (18)
			(5) edge (14)
			(5) edge (11)
			(0) edge (11)
			(0) edge (12)
			(0) edge (13)
			(0) edge (14)
			(0) edge (15)
			 
			(0) edge (16)
			(0) edge (17)
			(0) edge (18)
			;
			
		\end{tikzpicture}
		\caption{$\mu(C_{2n+1})$ }
		\label{muc_2n+1}	
	\end{minipage}%
	\begin{minipage}[b]{.55\textwidth}
		\centering
		\begin{tikzpicture}[scale=0.8]
			\vertex (1) at (0,0) [label= left:$1$] {};  
			\vertex (2) at (1,0) [label=above:$2$] {}; 
			\vertex (3) at (2,0) [label=right:$3$] {}; 
			\vertex (11) at (0,-1) [label=  left:$1'$] {};  
			\vertex (12) at (1,-1) [label=left:$2'$] {}; 
			\vertex (13) at (2,-1) [label=right:$3'$] {}; 
			\vertex (0) at (1,-2.5) [label=below:$0$] {}; 
			
			\path
			(1) edge[thick,red] (2)
			(2) edge[thick,red] (3)
			(1) edge[bend left=80,thick,red] (3)
			(1) edge[thick,red] (12)
			(1) edge[thick,red] (13)
			(2) edge (11)
			(2) edge[thick,red] (13)
			(3) edge[thick,red] (12)
			(3) edge (11)
			(0) edge (11)
			(0) edge[thick,red] (12)
			(0) edge[thick,red] (13)
			
			;
			
		\end{tikzpicture}
		\caption{$\mu(C_{3})$ }
		\label{muc_3}	
	\end{minipage}%
\end{figure}

To describe the line graph $L(\mu(C_{2n+1}))$, we give a double-indexed label for each edge of $\mu(C_{2n+1})$. For $1 \le i < j \le 2n+1$, let $c_{ij}$ denote the edge of the cycle $C_{2n+1}$ in $\mu(C_{2n+1})$ with endpoints $i$ and $j$. For $0 \le i \le 2n+1$ and $1 \le j \le (2n+1)$, let $a_{ij'}$ be the edge of $\mu(C_{2n+1})$ with end points $i$ and $j'$. Note that two vertices in $L({\mu(C_{2n+1})})$ are adjacent if and only if one of their indices are the same.

We consider the following two subgraphs of $\mu(C_{2n+1})$ which partition its edge set, as per the labeling $c_{ij}$'s and $a_{ij'}$'s given to the edges. The part with the edges $c_{ij}$'s is the original graph $C_{2n+1}$ and let $B_{2n+1}$ be the spanning subgraph with the remaining edges $a_{ij'}$'s. Note that $B_{2n+1}$ is a bipartite graph. 

To prove our main result stated in Theorem \ref{mu_cn}, we give an orientation $D$ to the edges of $L(\mu(C_{2n+1}))$, for $n \ge 2$, and show that $D$ is a semi-transitive orientation. The orientation $D$ is divided into three parts as described in the following steps:

\begin{enumerate}[label=\Roman*.]
	\item \textbf{Semi-transitive orientation assigned to $L(B_{2n+1})$}: The graph $B_{2n+1}$ is a subgraph of the complete bipartite graph $K_{2n+2, 2n+1}$, so that $L(B_{2n+1})$ is an induced subgraph of $L(K_{2n+2, 2n+1})$. Note that the line graph of a complete bipartite graph $L(K_{m,n})$ is isomporphic to the Cartesian product of the complete graphs $K_m$ and $K_n$  (cf. \cite{werra_1999}) as shown  Fig. \ref{kmn}. 
	The vertices in each rectangular box form a clique, i.e., for $1 \le i \le m$, $L_i = \{a_{ij'} |\  1 \le j\le n\}$ and, for $1 \le j \le n$, $L_j' = \{a_{ij'} |\ 1 \le i\le m \}$ form cliques in $L(K_{m,n})$. 
	From \cite[Theorem 5.4.10]{kitaev15mono}, the line graph $L(K_{m,n})$ is word-representable. By hereditary property of word-representable graphs, $L(B_{2n+1})$ is word-representable, and hence admits a semi-transitive orientation. In view of \cite[Theorem 5.4.10]{kitaev15mono}, consider the following semi-transitive orientation of $L(K_{2n+2,2n+1})$ restricted to the graph $L(B_{2n+1})$, as shown in Fig. \ref{b_n}.
	\begin{enumerate}[label=(\roman*)]
		\item For $0 \le i \le 2n+1$, $\overrightarrow{a_{ij_1'}a_{ij_2'}}$ if $j_1 < j_2$. 
		\item For $1 \le j \le 2n+1$, $\overrightarrow{a_{i_1j'}a_{i_2j'}}$ if $i_1 > i_2$. 
	\end{enumerate}
	
	\begin{figure}
		\centering
		\begin{minipage}{.4\textwidth}			
			\begin{tikzpicture}[scale=1]
				\vertex (1) at (0,0) [label=above:$1$] {};  
				\vertex (2) at (1,0) [label=above:$2$] {}; 
				\vertex (3) at (2,0) [label=above:$ $] {}; 
				\vertex (4) at (3,0) [label=right:$ $] {}; 	
				\vertex (5) at (4,0) [label=above:$m$] {}; 
				\vertex (11) at (0,-1) [label=below:$1'$] {};  
				\vertex (12) at (1,-1) [label=below:$2'$] {}; 
				\vertex (13) at (2,-1) [label=below:$ $] {}; 
				\vertex (14) at (3,-1) [label=right:$ $] {}; 	
				\vertex (15) at (4,-1) [label=below:$n'$] {}; 
				\node (A) at (2.7,-2) [label=left:$K_{m,n}$] {}; 
				\path
				(1) edge (11)
				(1) edge (12)
				(1) edge (13)
				(1) edge (14)
				(1) edge (15)
				(2) edge (11)
				(2) edge (12)
				(2) edge (13)
				(2) edge (14)
				(2) edge (15)
				(3) edge (11)
				(3) edge (12)
				(3) edge (13)
				(3) edge (14)
				(3) edge (15)
				(4) edge (11)
				(4) edge (12)
				(4) edge (13)
				(4) edge (14)
				(4) edge (15)
				(5) edge (11)
				(5) edge (12)
				(5) edge (13)
				(5) edge (14)
				(5) edge (15);
			
			\end{tikzpicture}	
		\end{minipage}
		\begin{minipage}{.5\textwidth}
			\centering
			\begin{tikzpicture}[scale=0.8]
				\begin{scope}[decoration={markings,mark=at position 0.65 with {\arrow[thick]{>}}}]
					\vertex (11) at (0,0) [label=above:$a_{m1'}$] {};  
					\vertex (12) at (1,0) [label=above:$a_{m2'}$] {}; 
					\vertex (13) at (2,0) [label=above:$ $] {}; 
					\vertex (14) at (3,0) [label=right:$ $] {}; 	
					\vertex (15) at (4,0) [label=above:$a_{mn'}$] {}; 
					\vertex (21) at (0,-1) [label=above:$ $] {};  
					\vertex (22) at (1,-1) [label=above:$ $] {}; 
					\vertex (23) at (2,-1) [label=above:$ $] {}; 
					\vertex (24) at (3,-1) [label=:$ $] {}; 	
					\vertex (25) at (4,-1) [label=below:$ $] {};
					\vertex (31) at (0,-2) [label=above:$ $] {};  
					\vertex (32) at (1,-2) [label=above:$ $] {}; 
					\vertex (33) at (2,-2) [label=above:$ $] {}; 
					\vertex (34) at (3,-2) [label=right:$ $] {}; 	
					\vertex (35) at (4,-2) [label=above:$ $] {}; 
					\vertex (41) at (0,-3) [label=below:$a_{21'}$] {};  
					\vertex (42) at (1,-3) [label=below:$a_{22'}$] {}; 
					\vertex (43) at (2,-3) [label=below:$ $] {}; 
					\vertex (44) at (3,-3) [label=right:$ $] {}; 	
					\vertex (45) at (4,-3) [label=below:$a_{2n'}$] {};  
					\vertex (51) at (0,-4) [label=below:$a_{11'}$] {};  
					\vertex (52) at (1,-4) [label=below:$a_{12'}$] {}; 
					\vertex (53) at (2,-4) [label=below:$ $] {}; 
					\vertex (54) at (3,-4) [label=right:$ $] {}; 	
					\vertex (55) at (4,-4) [label=below:$a_{1n'}$] {}; 
					
					\draw[red] (-1,-0.25) rectangle (5 ,0.6) node[pos=.5] {$ $};
					\node (L5) at (5,0.2) [label=right:$L_m$ ] {}; 
					\draw[red] (-1,-0.8) rectangle (5 ,-1.2) node[pos=.5] {$ $};
					\node (L4) at (5,-1) [label=right:$ $ ] {};
					\draw[red ] (-1,-1.8) rectangle (5 ,-2.2) node[pos=.5] {$ $};
					\draw[red ] (-1,-2.8) rectangle (5 ,-3.5) node[pos=.5] {$ $};
					\node (L2) at (5,-3.2) [label=right:$L_2$ ] {};
					\draw[red ] (-1,-3.8) rectangle (5 ,-4.5) node[pos=.5] {$ $};
					\node (L2) at (5,-4.2) [label=right:$L_1$ ] {};
					
					\draw[blue] (-0.45,-5) rectangle (0.35 ,1) node[pos=.5] {$ $};
					\node (L1') at (0.06,-5) [label=below:$L_1'$ ] {};
					\draw[blue] (0.6,-5) rectangle (1.35 ,1) node[pos=.5] {$ $};
					\node (L2') at (1,-5) [label=below:$L_2'$ ] {};
					\draw[blue] (1.8,-5) rectangle (2.2 ,1) node[pos=.5] {$ $};
					\draw[blue] (2.8,-5) rectangle (3.2 ,1) node[pos=.5] {$ $};
					\draw[blue] (3.55,-5) rectangle (4.35 ,1) node[pos=.5] {$ $};
					\node (Ln') at (4.05,-5) [label=below:$L_n'$ ] {};

					\path
					(11) edge[postaction={decorate}] (12)
					(12) edge[postaction={decorate}] (13)	
					(13) edge[postaction={decorate}] (14)
					(14) edge[postaction={decorate}] (15)
					(21) edge[postaction={decorate}] (22)
					(22) edge[postaction={decorate}] (23)	
					(23) edge[postaction={decorate}] (24)
					(24) edge[postaction={decorate}] (25)
					(31) edge[postaction={decorate}] (32)
					(32) edge[postaction={decorate}] (33)	
					(33) edge[postaction={decorate}] (34)
					(34) edge[postaction={decorate}] (35)
					(41) edge[postaction={decorate}] (42)
					(42) edge[postaction={decorate}] (43)	
					(43) edge[postaction={decorate}] (44)
					(44) edge[postaction={decorate}] (45)
					(51) edge[postaction={decorate}] (52)
					(52) edge[postaction={decorate}] (53)	
					(53) edge[postaction={decorate}] (54)
					(54) edge[postaction={decorate}] (55)
					(11) edge[postaction={decorate}] (21)
					(21) edge[postaction={decorate}] (31)
					(31) edge[postaction={decorate}] (41)	
					(41) edge[postaction={decorate}] (51)
					(12) edge[postaction={decorate}] (22)
					(22) edge[postaction={decorate}] (32)
					(32) edge[postaction={decorate}] (42)	
					(42) edge[postaction={decorate}] (52)
					(13) edge[postaction={decorate}] (23)
					(23) edge[postaction={decorate}] (33)
					(33) edge[postaction={decorate}] (43)	
					(43) edge[postaction={decorate}] (53)
					(14) edge[postaction={decorate}] (24)
					(24) edge[postaction={decorate}] (34)
					(34) edge[postaction={decorate}] (44)	
					(44) edge[postaction={decorate}] (54)
					(15) edge[postaction={decorate}] (25)
					(25) edge[postaction={decorate}] (35)
					(35) edge[postaction={decorate}] (45)	
					(45) edge[postaction={decorate}] (55);
				\end{scope}
			\end{tikzpicture}
			
			$L(K_{m,n}) \cong K_m \square K_n$
			
		\end{minipage}		
		\caption{$K_{m,n}$ and $L(K_{m,n})$ with a semi-transitive orientation}
		\label{kmn}	
	\end{figure}
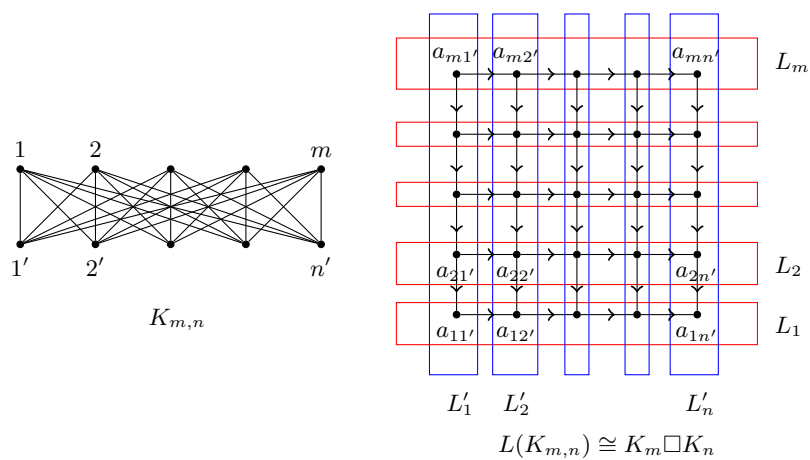
	
		\begin{figure}[h]
			
		\centering
		\begin{minipage}[b]{.5\textwidth}
		 
			\begin{tikzpicture}[scale=0.9 ]
				\begin{scope}[decoration={markings,mark=at position 0.6 with {\arrow[thick]{>}}}]
					\vertex (11) at (0,0) [label=above:$a_{(2n+1)1'}$] {};  
					\vertex (14) at (3,0) [label=above:$a_{(2n+1)(2n)'}$] {};			 
					
					\vertex (23) at (2,-1) [label=above:$a_{(2n)(2n-1)'}$] {}; 
					
					\vertex (25) at (4,-1) [label=above:$a_{(2n)(2n+1)'}$] {};
					
					\vertex (32) at (1,-1.5) [label=above:$ $] {}; 
					
					\vertex (34) at (3,-1.5) [label=right:$ $] {}; 	
					
					\vertex (41) at (0,-3) [label=left:$a_{21'}$] {}; 
					
					\vertex (412) at (1.3,-3) [label=right:$  $] {}; 
					
					\vertex (43) at (2,-2.3) [label=right:$  $] {}; 
					
					\vertex (413) at (1.3,-2.3) [label=right:$  $] {}; 
					
					\vertex (52) at (1,-4) [label=left:$a_{12'}$] {}; 
					
					\vertex (55) at (4,-4) [label=right:$a_{1{(2n+1)}'}$] {}; 
					
					\vertex (61) at (0,-5) [label=left:$a_{01'}$] {};  
					\vertex (62) at (1,-5) [label=above left:$a_{02'}$] {}; 
					\vertex (63) at (2,-5) [label=above left:$\cdots$] {}; 
					\vertex (64) at (3,-5) [label=below:$ $] {}; 	
					\vertex (65) at (4,-5) [label=right:$a_{0(2n+1)'}$] {};

					\node (L0) at (5.4,-5) [label=right:$L_0$ ] {}; 
					
					\node (L5) at (5.4,0 ) [label=right:$L_{2n+1}$ ] {}; 
					\node (L4) at (5.4,-1) [label=right:$ L_{2n}$ ] {};
					\node (L2) at (5.4,-3 ) [label=right:$L_2$ ] {};
					\node (L2) at (5.4,-4) [label=right:$L_1$ ] {};
					
					\node (L1') at (0.06,-5.5) [label=below:$L_1'$ ] {};
					\node (L2') at (1.06,-5.5) [label=below:$L_2'$ ] {};
					\node (Ln') at (4.06,-5.5) [label=below:$L_{2n+1}'$ ] {};

					\path
					(11) edge[postaction={decorate}] (14)
					
					(23) edge[postaction={decorate}] (25)

					(32) edge[postaction={decorate}] (34)

					(41) edge[postaction={decorate}] (412)
					
					(23) edge[postaction={decorate}] (43)
					(413) edge[postaction={decorate},dashed] (412)
						(413) edge[postaction={decorate},dashed] (43)
					
					(52) edge[postaction={decorate}] (55)

					(14) edge[postaction={decorate}] (34) 
					(32) edge[postaction={decorate}] (52)
					(11) edge[postaction={decorate}]  (41)
					(25) edge[postaction={decorate}] (55)
					(61)  edge[postaction={decorate}] (62)
					(62)  edge[postaction={decorate}]  (63)
					(63) edge[postaction={decorate}] (64)
					(64)  edge[postaction={decorate}] (65)
					(41)  edge[postaction={decorate}] (61)
					(52)  edge[postaction={decorate}] (62)
					(43)  edge[postaction={decorate}] (63)
					(34)  edge[postaction={decorate}] (64)
					(55)  edge[postaction={decorate}] (65)
					(11)  edge[postaction={decorate},bend right=10] (61)
					(32)  edge[postaction={decorate},bend right=10] (62)
					(23)  edge[postaction={decorate},bend right=10] (63)
					(14)  edge[postaction={decorate},bend right=10] (64)
					(25)  edge[postaction={decorate},bend right=10] (65)
					(61)  edge[postaction={decorate},bend right=20] (63)
					(61)  edge[postaction={decorate},bend right=20] (64)
					(61)  edge[postaction={decorate},bend right=20] (65)
					(62)  edge[postaction={decorate},bend right=20] (64)
					(62)  edge[postaction={decorate},bend right=20] (65)
					(63)  edge[postaction={decorate},bend right=20] (65)
					;
				\end{scope}
				
			\end{tikzpicture}
				\caption{$L(B_{2n+1})$}
			\label{b_n} 
		\end{minipage}
		\begin{minipage}[b]{.45\textwidth}
			\centering
		\qquad \qquad \qquad\begin{tikzpicture}[scale=.7]	
			\begin{scope}[decoration={markings,mark=at position 0.6  with {\arrow[thick]{<}}}]		
				\vertex (012) at (6,-3.5) [label=below:$c_{12}$] {};
				\vertex (023) at (6,-2.5) [label=left:$c_{23}$] {};
				\vertex (034) at (6,-1.5) [label=left:$c_{34} $] {};
				\vertex (045) at (6,-0.5) [label=right:$  $] {};
				\vertex (056) at (6,0.5) [label=left:$  $] {};
				\vertex (067) at (6,1.5) [label=above:$c_{(2n)(2n+1)}  $] {};
				\vertex (015) at (7.5,-1) [label=right:$c_{1(2n+1)} $] {};
				\path
				(012) edge[bend right =1,postaction={decorate}](023)
				(023) edge[bend right =1,postaction={decorate}] (034)
				(034) edge[bend right=1,postaction={decorate},dashed] (045)
				(045) edge[bend left=1,postaction={decorate},dashed] (056)
				(056) edge[bend right =1,postaction={decorate}] (067)
				(015) edge[bend right =1,postaction={decorate}] (067)
				(012) edge[bend right =1,postaction={decorate}] (015);
			\end{scope}
		\end{tikzpicture}
		\caption{$L(C_{2n+1})$}
		\label{lc_2n+1}
		\end{minipage}
	\end{figure}

	\item \textbf{Semi-transitive orientation assigned to $L(C_{2n+1})$}: 
	Consider the following orientation on $L(C_{2n+1})$, as shown in Fig. \ref{lc_2n+1}.
	\begin{enumerate}[label=(\roman*)]
		\item For $1\le i \le 2n-1$,  $\overrightarrow{c_{(i+1)(i+2)} c_{i(i+1)} }$.
		\item $\overrightarrow{c_{1(2n+1)} c_{12}}$ and $ \overrightarrow{c_{(2n)(2n+1)} c_{1{(2n+1)}}}$.
	\end{enumerate}
	Note that this orientation does not contain any directed cycles and shortcuts and hence a semi-transitive orientation.

	\item \textbf{Orientation of the edges between $L(B_{2n+1})$ and $L(C_{2n+1})$}: Each edge $\overline{ac}$ in $L(\mu(C_{2n+1}))$, where $a$ is a vertex in $L(B_{2n+1})$ and $c$ is a vertex in  $L(C_{2n+1})$, orient  $\overrightarrow{ac}$ in $D$.
\end{enumerate}

An example of orientation $D$ assigned to $L(\mu(C_{5}))$ is depicted in Fig. \ref{l_mu_c5}.
	
\begin{remark}\label{level}
			
	In the subgraph $L(B_{2n+1})$, the vertices in each $L_i$ are as follows.
		\begin{align*}
				L_0 &= \{a_{01'}, a_{02'}, \ldots,a_{0(2n+1)'} \},\\
				L_1 &= \{a_{12'}, a_{1(2n+1)'}\}, \\
			\forall \ 2 \le i \le 2n,\quad L_i &= \{a_{i(i-1)'},  a_{i(i+1)'}\} \text{ and}\\
				L_{2n+1} &= \{a_{(2n+1)1'}, a_{(2n+1)(2n)'}\}.
		\end{align*}
		Further, the vertices in each $L_j'$ are as follows.
		\begin{align*}
			L_1' &= \{a_{01'}, a_{21'}, a_{(2n+1)1'}\}, \\
			\forall \ 2 \le j \le 2n,\quad L_j' &= \{a_{0j'},a_{(j-1)j'},  {a_{(j+1)j'}}\} \text{ and}\\
			L_{2n+1}' &= \{a_{0(2n+1)'}, a_{1(2n+1)'}, a_{(2n)(2n+1)'}\}.
		\end{align*}
	\end{remark}
	
	\begin{remark}\label{rem:step1} 
		As per Step I, the orientation $D$ restricted  to the induced subgraph  $L(B_{2n+1})$ (see Fig. \ref{b_n}), we have the following observations.
		\begin{enumerate}[label=\roman*)]
			\item There  is no directed path from any vertex of $L_{i}$ to any vertex of $L_{j}$ when $j > i$ in $L(B_{2n+1})$.
			\item For all $ 2 \le i \le 2n+1$, there is no directed path from a vertex of $L_i$ to  any vertex of $L_{i-1}$ in $L(B_{2n+1})$. 
			\item There  is no directed path from any vertex of $L_{i}'$ to any vertex of $L_{j}'$ when $j < i$ in $L(B_{2n+1})$.
			\item For all $ 1 \le i \le 2n$, there is no directed path from a vertex of $L_i'$ to  any vertex of $L_{i+1}'$ in $L(B_{2n+1})$. 
		\end{enumerate}
	\end{remark}

	\begin{remark}\label{path}
		As per Step II, there is a directed path from the  vertex $c_{i(i+1)}$, for $2 \le i\le 2n$, to a vertex $c_{j(j+1)}$ in $D$ if $j <i$. Accordingly, there is a directed path from $a \in L_j$ to $c_{i(i+1)}$ if  $j \ge i$. Note that if $j \in \{1, 2n, 2n+1\}$ then there is a directed path from $a \in L_j$ to $c_{1(2n+1)}$.	\end{remark}
	
		\begin{remark}\label{rem:step3}
		As  per Step III, there is no directed edge from a vertex of $L(C_{2n+1})$ to a vertex of $L(B_{2n+1})$ in the orientation $D$ of  $L(\mu(C_{2n+1}))$. Thus, there cannot be any directed path from a vertex of $L(C_{2n+1})$ to a vertex of $L(B_{2n+1})$. Consequently, Remark \ref{rem:step1} is true for the orientation $D$ assigned to $L(\mu(C_{2n+1}))$.		
	\end{remark}

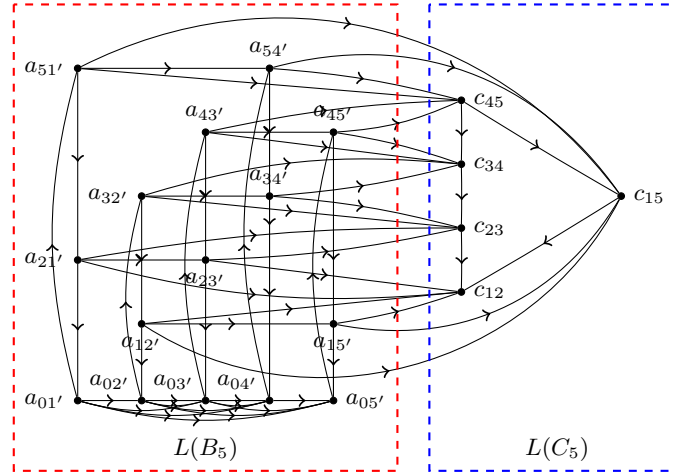
\begin{figure}[h]
\centering
	 
		\begin{tikzpicture}[scale=0.85]
			\begin{scope}[decoration={markings,mark=at position 0.55  with {\arrow[thick]{<}}}]
				\vertex (11) at (0,0) [label=left:$a_{51'}$] {};  
				
				\vertex (14) at (3,0) [label=above:$a_{54'} $] {};

				\vertex (23) at (2,-1) [label=above:$a_{43'} $] {}; 
				
				\vertex (25) at (4,-1) [label=above:$a_{45'}$] {};
				
				\vertex (32) at (1,-2) [label=left:$a_{32'} $] {}; 
				
				\vertex (34) at (3,-2) [label=above:$a_{34'} $] {}; 	
				
				\vertex (41) at (0,-3) [label=left:$a_{21'}$] {};  
				
				\vertex (43) at (2,-3) [label=below:$a_{23'} $] {}; 
				
				\vertex (52) at (1,-4) [label=below:$a_{12'}$] {}; 
				
				\vertex (55) at (4,-4) [label=below:$a_{15'}$] {}; 
				
				\vertex (01) at (0,-5.2) [label=left:$a_{01'}$] {};
				\vertex (02) at (1,-5.2) [label=above left:$a_{02'}$] {};
				\vertex (03) at (2,-5.2) [label=above left:$a_{03'} $] {};
				\vertex (04) at (3,-5.2) [label=above left:$a_{04'}$] {};
				\vertex (05) at (4,-5.2) [label=right:$a_{05'} $] {};

				\vertex (012) at (6,-3.5) [label=right:$c_{12}$] {};
				\vertex (023) at (6,-2.5) [label=right:$c_{23}$] {};
				\vertex (034) at (6,-1.5) [label=right:$c_{34} $] {};
				\vertex (045) at (6,-0.5) [label=right:$c_{45} $] {};
				\vertex (015) at (8.5,-2) [label=right:$c_{15} $] {};
				\node (B) at (2,-5.5) [label=below:$L(B_5) $] {};
				\node (B) at (7.5,-5.5) [label=below:$L(C_5) $] {};
				
				\path
				
				(012) edge[bend left=5,postaction={decorate}] (55)
				(012) edge[postaction={decorate}] (52)
				(012) edge[bend left=10,postaction={decorate}] (41)
				(012) edge[postaction={decorate}] (43)
				(023) edge[bend left=5,postaction={decorate}] (43)
				(023) edge[bend right=5,postaction={decorate}] (41)
				(023) edge[bend right=5,postaction={decorate}] (34)
				(023) edge[postaction={decorate}] (32)
				(034) edge[bend left=5,postaction={decorate}] (34)
				(034) edge [bend right=11,postaction={decorate}] (32)
				(034) edge[bend right=5,postaction={decorate}] (25)
				(034) edge[postaction={decorate}] (23)
				(045) edge[bend left=10,postaction={decorate}] (25)
				(045) edge[bend right=5,postaction={decorate}] (23)
				(045) edge[bend right=5,postaction={decorate}] (14)
				(045) edge[postaction={decorate}]  (11)
				(015) edge[bend right=40,postaction={decorate}] (11)
				(015) edge[bend right=35,postaction={decorate}] (14)
				(015) edge[bend left=48,postaction={decorate}] (52)
				(015) edge[bend left=35,postaction={decorate}] (55)
				;
				\path
				(012) edge[bend right =1,postaction={decorate}](023)
				(023) edge[bend right =1,postaction={decorate}] (034)
				(034) edge[bend right=1,postaction={decorate}] (045)
				(015) edge[bend left=1,postaction={decorate}] (045)
				(012) edge[bend right =1,postaction={decorate}] (015);

				\path 
				(55) edge[postaction={decorate}] (52)
				(43) edge[postaction={decorate}] (41)
				(34) edge[postaction={decorate}] (32)
				(25) edge[postaction={decorate}] (23)
				(14) edge[postaction={decorate}] (11) 
				(41) edge[postaction={decorate}] (11)
				(52) edge[postaction={decorate}] (32)
				(43) edge[postaction={decorate}] (23)
				(34) edge[postaction={decorate}] (14)
				
				(01) edge[postaction={decorate}] (41)
				
				(02) edge[postaction={decorate}] (52)
				
				(03) edge[postaction={decorate}] (43)
				
				(04) edge[postaction={decorate}] (14)
				
				(04) edge[postaction={decorate}] (34)
				
				(05) edge[postaction={decorate}] (25)
				
				(05) edge[postaction={decorate}] (55)
				
				;
				\path
				(02) edge[ postaction={decorate}] (01)
			 	(03) edge[bend left = 15,postaction={decorate}] (01)
			 	(04) edge[bend left=15,postaction={decorate}] (01)
			 	(05) edge[bend left=15,postaction={decorate}] (01)

			 	(03) edge[bend left=15,postaction={decorate}] (02)
			 	(04) edge[bend left=15,postaction={decorate}] (02)
			 	(05) edge[bend left=15,postaction={decorate}] (02)

			 	(04) edge[bend left=15,postaction={decorate}] (03)
			 	(05) edge[bend left=15,postaction={decorate}] (03)
			 	
				(03) edge[postaction={decorate}] (02)
				(04) edge[postaction={decorate}] (03)
				(05) edge[postaction={decorate}] (04);
				
				\draw[dashed,thick, red] (-1,1) rectangle (5 ,-6.3) node[pos=.5] {$ $};
				\draw[dashed,thick, blue] (5.5,1) rectangle (9.5 ,-6.3) node[pos=.5] {$ $};
				\path 
				(11) edge[ bend right=15,postaction={decorate}] (01)
			
				(32) edge[bend right=15,postaction={decorate}] (02)

				(23) edge[bend right=15,postaction={decorate}] (03)
				
				(14) edge[bend right=15,postaction={decorate}] (04)
				
				(25) edge[bend right=15,postaction={decorate}] (05);
				
			\end{scope}
		\end{tikzpicture}
		\caption{$L(\mu(C_5))$ with the semi-transitive oreintation $D$}
		\label{l_mu_c5}	
 
\end{figure}

To conclude that $D$ is semi-transitive,  we need the following two lemmas.
 
\begin{lemma}\label{acyclic}
	For $n \ge 2$, the orientation $D$ of $L(\mu(C_{2n+1}))$ is acyclic.
\end{lemma}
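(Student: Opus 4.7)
The plan is to argue that any directed cycle in $D$ must be confined to either $L(B_{2n+1})$ or $L(C_{2n+1})$, and then to rule out a directed cycle in each of these induced suborientations separately.

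First, I would invoke Remark \ref{rem:step3}: every edge between $L(B_{2n+1})$ and $L(C_{2n+1})$ is oriented from the $L(B_{2n+1})$ side into the $L(C_{2n+1})$ side, so in particular there is no directed edge from $L(C_{2n+1})$ back to $L(B_{2n+1})$. Consequently, a directed cycle that uses vertices from both sides would have to traverse at least one cross edge from the $C$-side to the $B$-side, which does not exist in $D$. Hence any putative directed cycle must lie entirely inside one of the two parts.

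For the $L(B_{2n+1})$ part, I would define the potential $\phi(a_{ij'}) = j - i$ on its vertex set and verify, using the two rules of Step I, that $\phi$ strictly increases along every directed edge: rule (i) keeps $i$ fixed while increasing $j$, and rule (ii) keeps $j$ fixed while decreasing $i$, so in either case $\phi$ jumps up by a positive integer. A strictly monotone potential along edges forbids any directed cycle. Alternatively, one can simply recall that this suborientation is inherited from the semi-transitive (hence acyclic) orientation of $L(K_{2n+2,2n+1})$ already referenced in Step I.

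For the $L(C_{2n+1})$ part, the induced subgraph is itself a cycle on $c_{12}, c_{23}, \ldots, c_{(2n)(2n+1)}, c_{1(2n+1)}$, and reading off the rules of Step II shows that $c_{(2n)(2n+1)}$ is a source while $c_{12}$ is a sink; indeed, the list $c_{(2n)(2n+1)}, c_{(2n-1)(2n)}, \ldots, c_{23}, c_{1(2n+1)}, c_{12}$ is easily checked to be a topological order, so this suborientation is acyclic. Combining these three observations yields Lemma \ref{acyclic}. I do not anticipate a genuine obstacle here: the real difficulty in the overall construction lives in semi-transitivity, not in acyclicity, because once the cross edges have a uniform direction the problem decouples into two independent and routine checks.
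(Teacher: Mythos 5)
Your proposal is correct and follows essentially the same route as the paper: both arguments use Remark \ref{rem:step3} (all cross edges point from $L(B_{2n+1})$ into $L(C_{2n+1})$) to confine any directed cycle to a single part, and then appeal to the acyclicity of the orientations from Steps I and II on each part. Your explicit potential $\phi(a_{ij'})=j-i$ and the topological order on $L(C_{2n+1})$ are valid but only make concrete what the paper disposes of by citing the semi-transitivity of the two restricted orientations.
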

\begin{proof}
	Suppose the orientation $D$  of $L(\mu(C_{2n+1}))$ contains a directed cycle, say $\langle b_1, b_2, \ldots, b_k, b_1 \rangle$. Since the orientation $D$ restricted to the induced subgraphs  $L(B_{2n+1})$ and $L(C_{2n+1})$  of $L(\mu(C_{2n+1}))$ are semi-transtive, the directed cycle $\langle b_1, b_2, \ldots, b_k, b_1 \rangle$ must involve some vertices of $L(B_{2n+1})$ and $L(C_{2n+1})$. 
	
	Suppose $b_1$ is a vertex of $L(C_{2n+1})$. Then there exists $b_j$ ($ 2 \le j \le k$) such that $b_j$ is a vertex of $L(B_{2n+1})$. This implies that there is a directed path from $b_1$ to $b_j$, contradicting Remark \ref{rem:step3}.
	
	Suppose $b_1$ is a vertex of $L(B_{2n+1})$. Then there exists $b_j$ ($2 \le j \le k$) such that  $b_j$ is a vertex of $L(C_{2n+1})$. Clearly, there is a directed path from $b_j$ to $b_1$, contradicting Remark \ref{rem:step3}.
	
	Hence, the orientation $D$ is acyclic. \qed
\end{proof}

\begin{lemma}\label{semi}
	For $n \ge 2$, the orientation $D$ of $L(\mu(C_{2n+1}))$ does not contain any shortcut.
\end{lemma}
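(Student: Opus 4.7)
My plan is a proof by contradiction: suppose $D$ contains a shortcut $\langle v_1,v_2,\dots,v_p\rangle$ with shortcutting edge $\overrightarrow{v_1 v_p}$ and some missing pair $\overrightarrow{v_iv_j}$ where $i<j$. Since Step III orients every edge between $L(B_{2n+1})$ and $L(C_{2n+1})$ from $L(B_{2n+1})$ to $L(C_{2n+1})$, the endpoints of the shortcutting edge fall into exactly one of three configurations: (a) $v_1,v_p\in L(B_{2n+1})$, (b) $v_1,v_p\in L(C_{2n+1})$, or (c) $v_1\in L(B_{2n+1})$ and $v_p\in L(C_{2n+1})$.

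Cases (a) and (b) are dispatched using Remark~\ref{rem:step3}, which says that no arc of $D$ exits $L(C_{2n+1})$ back to $L(B_{2n+1})$. In (a), this means the directed path, which ends at $v_p\in L(B_{2n+1})$, can never have entered $L(C_{2n+1})$, so the entire shortcut lies in the restriction of $D$ to $L(B_{2n+1})$, contradicting the semi-transitivity established in Step~I. In (b), the path starts at $v_1\in L(C_{2n+1})$ and thus can never reach $L(B_{2n+1})$, so the shortcut lies entirely in $L(C_{2n+1})$, contradicting Step~II.

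The substantive case is (c). Let $k$ be the largest index with $v_k\in L(B_{2n+1})$; then $v_1,\dots,v_k$ traverse $L(B_{2n+1})$ and $v_{k+1},\dots,v_p$ traverse $L(C_{2n+1})$. Writing $v_t=a_{\alpha_t\beta_t'}$ for $t\le k$ and $v_s=c_{p_sq_s}$ for $s>k$, Step~I forces the sequence of first indices $\alpha_1\ge\alpha_2\ge\cdots\ge\alpha_k$ to be non-increasing (a move within an $L_i$-clique preserves the first index and raises the second, while a move within an $L_j'$-clique preserves the second index and strictly lowers the first). Moreover, the shortcutting arc gives $\alpha_1\in\{p_p,q_p\}$ and the crossing arc $\overrightarrow{v_kv_{k+1}}$ gives $\alpha_k\in\{p_{k+1},q_{k+1}\}$. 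My intention is to show that $\{v_1,\dots,v_p\}$ induces a transitive tournament under $D$, i.e., $\overrightarrow{v_iv_j}$ is an arc of $D$ for every $i<j$, which contradicts the missing-edge assumption. I would split on the position of $(i,j)$: for $j\le k$ use Step~I inside $L(B_{2n+1})$; for $i>k$ use Step~II inside $L(C_{2n+1})$; and for crossing pairs $i\le k<j$ it suffices to establish adjacency in $L(\mu(C_{2n+1}))$, since Step~III then supplies the correctly-oriented arc.

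The principal obstacle is the crossing sub-case: one must verify, for every $i\le k$ and $j>k$, that $v_i=a_{\alpha_i\beta_i'}$ shares a vertex with the cycle edge $v_j=c_{p_jq_j}$, i.e., $\alpha_i\in\{p_j,q_j\}$. This forces a careful interaction between the non-increasing chain $\alpha_1\ge\cdots\ge\alpha_k$, the admissible $\beta$-values inside each $L_i$ (which Remark~\ref{level} confines to the two cyclic neighbors of $i$), and the oriented $L(C_{2n+1})$ walk $v_{k+1}\to\cdots\to v_p$ from Step~II---the latter having two possible branches at $c_{(2n)(2n+1)}$, namely the descending chain through consecutive cycle edges and the detour through $c_{1(2n+1)}$. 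Using these constraints together with the fact that $C_{2n+1}$ has odd length at least $5$, the shared cycle endpoint of $v_1$ with $v_p$ propagates along the whole path, providing every required arc and producing the desired contradiction.
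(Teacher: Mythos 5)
Your overall skeleton matches the paper's: both split the putative shortcut at the $L(B_{2n+1})$/$L(C_{2n+1})$ interface, both use Remark~\ref{rem:step3} to force a prefix in $L(B_{2n+1})$ followed by a suffix in $L(C_{2n+1})$, and both dispose of the single-part cases by the semi-transitivity of the restrictions from Steps I and II. However, there are two problems with how you propose to finish case (c). First, the inference ``for $j\le k$ use Step~I'' and ``for $i>k$ use Step~II'' is not valid: semi-transitivity of a restriction only forbids shortcuts, i.e.\ it yields all arcs $\overrightarrow{v_iv_j}$ along a directed sub-path \emph{only when the two ends of that sub-path are themselves joined by an arc}, which you do not know for $v_1v_k$ or for $v_{k+1}v_p$. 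Worse, the conclusion you want is simply false for long sub-paths: $L(C_{2n+1})$ is a cycle of length $2n+1\ge 5$, so any directed path on three or more of its vertices (e.g.\ $c_{45}\to c_{34}\to c_{23}$ in $L(C_5)$) already contains a non-adjacent pair. So ``every pair is an arc'' cannot be proved directly; what must be proved is that the shortcutting edge $\overrightarrow{v_1v_p}$ together with the index constraints forces the prefix and the suffix each to have at most two vertices, after which the whole configuration collapses to a triangle or a transitive $4$-clique. That quantitative collapse is the entire content of the lemma.

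Second, that collapse is exactly what you defer as ``the principal obstacle,'' asserting that the shared endpoint ``propagates along the whole path'' without carrying out the verification. The paper's proof consists almost entirely of this verification: a four-way case analysis on $v_p\in\{c_{12},\,c_{i(i+1)}\ (2\le i\le 2n-1),\,c_{1(2n+1)},\,c_{(2n)(2n+1)}\}$, with sub-cases on the (at most four) possible choices of $v_1$ forced by adjacency to $v_p$, using Remarks~\ref{path} and~\ref{rem:step3} to show in each sub-case that either no directed path of the required length exists or the four vertices induce a clique. Your constraints ($\alpha_1\ge\cdots\ge\alpha_k$, the two-element rows $L_i$ of Remark~\ref{level}, the descending orientation of $L(C_{2n+1})$) are the right ingredients and can be assembled into such an argument, but as written the proposal states the conclusion of the hard step rather than proving it, so it does not yet constitute a proof.
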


\begin{proof}
	Suppose that the orientation $D$ contains a shortcut, say  $X = \langle b_1,b_2,\ldots,b_k \rangle$, for $k\ge 4$, with the shortcutting edge $\overrightarrow{b_1  b_k}$. Since the orientation $D$ restricted to $L(B_{2n+1})$ and $L(C_{2n+1})$ are semi-transitive, the shortcut $X$ must involve some vertices of $L(B_{2n+1})$ and some vertices of $L(C_{2n+1})$. However, we show that none of the vertices of $L(C_{2n+1})$ are in $X$. First we prove the following claim.
	
	\begin{claim}
		There exists $2\le i\le k$ such that $b_1, b_2, \ldots, b_{i-1}$ are vertices of $L(B_{2n+1})$ and $b_{i}, \ldots, b_k$ are vertices of $L(C_{2n+1})$. 
	\end{claim} 
	
	\begin{tabular}{rr}
		\hspace{2cm} & \begin{minipage}[r]{.75\textwidth}
			\textit{Proof of the Claim.}
				Suppose $b_r$ is a vertex of $L(C_{2n+1})$ and $b_s$ is a vertex of $L(B_{2n+1})$ such that $1 \le r<s\le k$. Then there is a directed path from $b_r$ to $b_s$, which is a contradiction to Remark \ref{rem:step3}. Thus, if $b_i$ is a vertex of $L(B_{2n+1})$ then $b_j$ is a vertex of $L(B_{2n+1})$ for $j < i$. Since the shortcut $X$ contains both vertices of $L(B_{2n+1})$ and $L(C_{2n+1})$, there exists  $2\le i\le k$ such that $b_1, b_2, \ldots, b_{i-1}$ are vertices of $L(B_{2n+1})$ and $b_{i}, \ldots, b_k$ are vertices of $L(C_{2n+1})$. 
		\end{minipage}
	\end{tabular}
	
	As a consequence, we see that $b_1$ and $b_k$ must be a vertex of $L(B_{2n+1})$ and $L(C_{2n+1})$, respectively. Through the following cases, we prove that none of the vertices of $L(C_{2n+1})$ is $b_k$.	
	
	\begin{itemize}
		\item[-] Case 1: $b_k = c_{12}$.  Since $b_1$ is a vertex of $L(B_{2n+1})$ and adjacent to $c_{12}$, the vertex $b_1$ must be among the vertices $a_{12'}, a_{1(2n+1)'}, a_{21'}$ and $a_{23'}$. Also, since  $b_{k-1}$ is adjacent to $c_{12}$, the vertex $b_{k-1}$ must be among the vertices $c_{1(2n+1)}, c_{23}, a_{12'}, a_{1(2n+1)'}, a_{21'}$ and $a_{23'}$. We handle this in the following four subcases.
		\begin{itemize}
			\item Subcase 1.1: $b_1 = a_{12'}$. By Remark \ref{rem:step3} and Remark \ref{path}, there is no directed path from $a_{12'}$ to the vertices $a_{21'}, a_{23'}$ and $c_{23}$. Thus, the only possible shortcut is $\langle a_{12'}, a_{1(2n+1)'}, c_{1(2n+1)}, c_{12} \rangle$. However, these vertices induce a clique (see Fig. \ref{ind_sub_1}) in $L(\mu(C_{2n+1}))$, and hence  $b_1$ cannot be $a_{12'}$.
			
			\begin{figure}[h!]
				\centering
				\begin{tikzpicture}[scale=0.8]
					\begin{scope}[decoration={markings,mark=at position 0.5 with {\arrow[thick]{>}}}]
						\vertex (1) at (0,0) [label=left:$\scriptstyle{a_{12'}}$] {};
						\vertex (2) at (2,0) [label=below:$\scriptstyle{a_{1(2n+1)'}}$] {}; 
						\vertex (3) at (4,0) [label=below:$\scriptstyle{c_{1(2n+1)}}$] {};
						\vertex (4) at (6,0) [label=right:$\scriptstyle{c_{12}}$] {};

						\path
						(1) edge[postaction={decorate}] (2)
						(2) edge[postaction={decorate}] (3)
						(3) edge[postaction={decorate}] (4)
						(1) edge[bend right = 30, postaction={decorate}] (3)
						(1) edge[bend left = 20,postaction={decorate}] (4)
						(2) edge[bend right = 30, postaction={decorate}] (4);
					\end{scope}
				\end{tikzpicture}
				\caption{The subgraph of $L(\mu(C_{2n+1}))$ induced by $a_{12'}, a_{1(2n+1)'}, c_{1(2n+1)}$ and $c_{12}$}
				\label{ind_sub_1}			
			\end{figure}
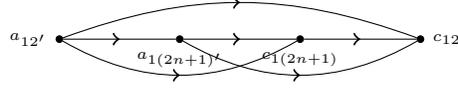
			\item Subcase 1.2: $b_1 = a_{1(2n+1)'}$. By Remark \ref{rem:step3} and Remark \ref{path}, there is no directed path from $a_{1(2n+1)'}$ to $c_{23}, a_{12'}, a_{21'}$ and $a_{23'}$. This implies that $b_k$ must be $c_{1(2n+1)}$. Clearly, there is no directed path of length at least two from $a_{1(2n+1)'}$ to $c_{1(2n+1)}$. The shortcut $X$ must be a clique of size three, which is a contradiction to the length of a shortcut. Thus, $b_1$ cannot be $a_{1(2n+1)'}$.
			\item Subcase 1.3: $b_1 = a_{21'}$. By Remark \ref{rem:step3} and Remark \ref{path}, there is no directed path from $a_{21'}$ to $a_{12'}$, $a_{1(2n+1)'}$ and $c_{1(2n+1)}$. Thus, the only possible shortcut is $\langle a_{21'}, a_{23'}, c_{23}, c_{12} \rangle$. However, these vertices induces a clique in $L(\mu(C_{2n+1}))$ (see Fig. \ref{ind_sub_12}). Hence, $b_1$ cannot be $a_{21'}$.
						
			\begin{figure}[h!]
				\centering
				\begin{tikzpicture}[scale=0.8]
					\begin{scope}[decoration={markings,mark=at position 0.5 with {\arrow[thick]{>}}}]
						\vertex (1) at (0,0) [label=left:$\scriptstyle{a_{21'}}$] {};
						\vertex (2) at (2,0) [label=below:$\scriptstyle{a_{23'}}$] {}; 
						\vertex (3) at (4,0) [label=below:$\scriptstyle{c_{23}}$] {};
						\vertex (4) at (6,0) [label=right:$\scriptstyle{c_{12}}$] {};

						\path
						(1) edge[postaction={decorate}] (2)
						(2) edge[postaction={decorate}] (3)
						(3) edge[postaction={decorate}] (4)
						(1) edge[bend right = 30, postaction={decorate}] (3)
						(1) edge[bend left = 20,postaction={decorate}] (4)
						(2) edge[bend right = 30, postaction={decorate}] (4);
					\end{scope}
				\end{tikzpicture}
				\caption{The subgraph of $L(\mu(C_{2n+1}))$ induced by $a_{21'}, a_{23'}, c_{23}$ and $c_{12}$}
				\label{ind_sub_12}			
			\end{figure}
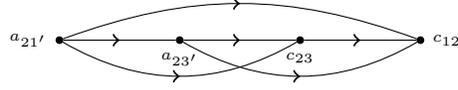
			
			\item Subcase 1.4:  $b_1 = a_{23'}$. By Remark \ref{rem:step3} and Remark \ref{path}, there is no directed path from $a_{23'}$ to $c_{1(2n+1)}, a_{12'}, a_{1(2n+1)'}$ and $a_{21'}$. This implies that $b_k$ must be $c_{23}$. Clearly, there is no directed path of length at least two from $a_{23'}$ to $c_{23}$. The shortcut $X$ must be a clique of size  three, which is a contradiction to the length of a shortcut. Thus, $b_1$ cannot be $a_{23'}$.			
		\end{itemize}
			Hence, $c_{12}$ cannot be $b_k$. 

		\item[-] Case 2: For $2\le i\le 2n-1$, $b_k = c_{i(i+1)}$. The vertex $b_1$ must be among the vertices $a_{i(i-1)'}$, $a_{i(i+1)'}$, $a_{(i+1)i'}$ and $a_{(i+1)(i+2)'}$, and the vertex $b_{k-1}$ must be among the vertices $c_{(i+1)(i+2)}$, $a_{i(i-1)'}$, $a_{i(i+1)'}$, $a_{(i+1)i'}$ and $a_{(i+1)(i+2)'}$. We handle this case in the following four subcases:
		\begin{itemize}
			\item Subcase 2.1: $b_1 = a_{i(i-1)'}$. By Remark \ref{rem:step3} and Remark \ref{path}, there is no directed path from $a_{i(i-1)'}$ to $a_{(i+1)i'}$, $a_{(i+1)(i+2)'}$ and $c_{(i+1)(i+2)}$. Thus, $b_{k-1}$ must be $a_{i(i+1)'}$. Clearly, there is no directed path of length at least two from $a_{i(i-1)'}$ to $a_{i(i+1)'}$. The shortcut $X$ must be of length two, which is a contradiction. Hence, $b_1$ cannot be $a_{i(i-1)'}$.
			\item Subcase 2.2: $b_1 = a_{i(i+1)'}$. By Remark \ref{rem:step3} and Remark \ref{path}, there is no directed path from $a_{i(i+1)'}$ to $a_{i(i-1)'}$, $a_{(i+1)i'}$, $a_{(i+1)(i+2)'}$ and $c_{(i+1)(i+2)}$. This implies that there is no directed path from $a_{1(2n+1)'}$ to any possible vertices of $b_{k-1}$. Thus, $b_1$ cannot be $a_{i(i+1)'}$.
			\item Subcase 2.3: $b_1 = a_{(i+1)i'}$. By Remark \ref{rem:step3} and Remark \ref{path}, there is no directed path from $a_{(i+1)i'}$ to the vertices $a_{i(i-1)'}$, $a_{i(i+1)'}$. The only possible shortcut is $\langle a_{(i+1)i'}, a_{(i+1)(i+2)'}, c_{(i+1)(i+2)}, c_{i(i+1)} \rangle$. However, these vertices induces a clique in $L(\mu(C_{2n+1}))$ (see Fig. \ref{ind_sub_22}). Thus, $b_1$ cannot be $a_{(i+1)i'}$.
			
				\begin{figure}[h]
				\centering
				
				\begin{tikzpicture}[scale=0.8]
					\begin{scope}[decoration={markings,mark=at position 0.5 with {\arrow[thick]{>}}}]
						\vertex (1) at (0,0) [label=left:$\scriptstyle{a_{(i+1)i'}}$] {};
						\vertex (2) at (2,0) [label=below:$\scriptstyle{a_{(i+1)(i+2)'}}$] {}; 
						\vertex (3) at (4,0) [label=below:$\scriptstyle{c_{(i+1)(i+2)}}$] {};
						\vertex (4) at (6,0) [label=right:$\scriptstyle{c_{i(i+1)}}$] {};

						\path 
						(1) edge[postaction={decorate}] (2)
						(2) edge[postaction={decorate}] (3)
						(3) edge[postaction={decorate}] (4)
						(1) edge[bend right = 30,postaction={decorate}] (3)
						(1) edge[bend left = 20,postaction={decorate}] (4)
						(2) edge[bend right = 30,postaction={decorate}] (4);
					\end{scope}
				\end{tikzpicture}
				\caption{The subgraph of $L(\mu(C_{2n+1}))$ induced by $a_{(i+1)i'}, a_{(i+1)(i+2)'}, c_{(i+1)(i+2)}$ and $c_{i(i+1)}$}
				\label{ind_sub_22}			
			\end{figure}
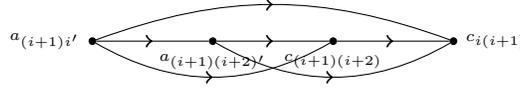
			\item Subcase 2.4: $b_1 = a_{(i+1)(i+2)'}$. By Remark \ref{rem:step3} and Remark \ref{path}, there is no directed path from $ a_{(i+1)(i+2)'}$ to the vertices $a_{i(i-1)'}$, $a_{i(i+1)'}$ and $a_{(i+1)i'}$. Thus, $b_{k-1}$ must be $c_{(i+1)(i+2)}$. Clearly, there is no directed path of length at least two from $a_{(i+1)(i+2)'}$ to $c_{(i+1)(i+2)}$. The shortcut $X$ must be a clique of size three, which is a contradiction. Thus, $a_{(i+1)(i+2)'}$ cannot be $b_1$.
		\end{itemize}
		Hence, $c_{i(i+1)}$ cannot be $b_k$, for $1\le i \le 2n-1$.
		
		\item [-] Case 3: $b_k = c_{1(2n+1)}$.  The vertex $b_1$ must be one among the vertices $a_{12'}$, $a_{1(2n+1)'}$, $a_{(2n+1)1'}$ and $a_{(2n+1)(2n)'}$ and the vertex $b_{k-1}$ must be among the vertices $c_{(2n)(2n+1)}$, $a_{12'}$, $a_{1(2n+1)'}$, $a_{(2n+1)1'}$ and $a_{(2n+1)(2n)'}$. If $b_1 \in \{a_{12'}, a_{1(2n+1)'}\}$ then, by Remark \ref{rem:step3} and Remark \ref{path}, there is no directed path from the vertex $b_1$ to any possible vertices of $b_{k-1}$. Thus, $b_k$ can neither be $a_{12'}$ nor $a_{1(2n+1)'}$. Further, if $b_1 = a_{(2n+1)1'}$ then the only possible shortcut is $\langle a_{(2n+1)1'}, a_{(2n+1)(2n)'}, c_{(2n)(2n+1)}, c_{1(2n+1)}\rangle$. However, these vertices induces a clique in $L(\mu(C_{2n+1}))$ (see. Fig. \ref{ind_sub_21}). Thus, $b_1$ cannot be $a_{(2n+1)1'}$.
		
		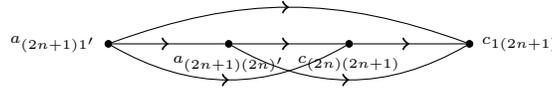
\begin{figure}[h]
			\centering				
			\begin{tikzpicture}[scale=0.8]
				\begin{scope}[decoration={markings,mark=at position 0.5 with {\arrow[thick]{>}}}]
					\vertex (1) at (0,0) [label=left:$\scriptstyle{a_{(2n+1)1'}}$] {};
					\vertex (2) at (2,0) [label=below:$\scriptstyle{a_{(2n+1)(2n)'}}$] {}; 
					\vertex (3) at (4,0) [label=below:$\scriptstyle{c_{(2n)(2n+1)}}$] {};
					\vertex (4) at (6,0) [label=right:$\scriptstyle{c_{1(2n+1)}}$] {};

					\path 
					(1) edge[postaction={decorate}] (2)
					(2) edge[postaction={decorate}] (3)
					(3) edge[postaction={decorate}] (4)
					(1) edge[bend right = 30,postaction={decorate}] (3)
					(1) edge[bend left = 20,postaction={decorate}] (4)
					(2) edge[bend right = 30,postaction={decorate}] (4);
				\end{scope}
			\end{tikzpicture}
			\caption{The subgraph of $L(\mu(C_{2n+1}))$ induced by $a_{(2n+1)1'}, a_{(2n+1)(2n)'}, c_{(2n)(2n+1)}$ and $c_{1(2n+1)}$}
			\label{ind_sub_21}			
		\end{figure} 
		If $b_1 = a_{(2n+1)(2n)'}$ then there is no directed path from $b_1$ to any of the vertices $a_{(2n+1)1'}, a_{12'}$ and $a_{1(2n+1)'}$. Thus, $b_{k-1}$ must be $c_{(2n)(2n+1)}$. Clearly, there is no directed path of length at least two from $b_1$ to $c_{(2n)(2n+1)}$. Thus, the shortcut $X$ forms a clique of size three, which is a contradiction. Hence, $b_1$ cannot be $a_{(2n+1)(2n)'}$.		
		
		\item[-] Case 4: $b_k = c_{(2n)(2n+1)}$. Since $c_{(2n)(2n+1)}$ is a source in the orientation $D$ when restricted to the induced subgraph $L(C_{2n+1})$. The vertex $b_{k-1}$ must be a vertex from $L(B_{2n+1})$. Thus, the vertices of $b_1$ and $b_{k-1}$ must be among the vertices $a_{(2n)(2n-1)'}$, $a_{(2n)(2n+1)'}$,  $a_{(2n+1)1'}$ and $a_{(2n+1)(2n)'}$. By Remark \ref{rem:step3} and Remark \ref{path}, if $b_1 \in \{a_{(2n)(2n-1)'}, a_{(2n+1)1'}\}$ then the shortcut $X$ must be a clique size three, which is a contradiction. If $b_1 \in  \{a_{(2n)(2n+1)'}, a_{(2n+1)(2n)'}\} $ then there is no directed path of length at least two from $b_1$ to $c_{(2n)(2n+1)}$. Hence, $b_k$ cannot be $c_{(2n)(2n+1)}$.
	\end{itemize} 
		
	Since none of the vertices of $L(C_{2n+1})$ is $b_k$, from the statement proved under `claim', we get that the shortcut $X$ contains only vertices of $L(B_{2n+1})$. Clearly, the orientation $D$ restricted to $L(B_{2n+1})$ does not contain any shortcut. Hence, the orientation $D$ contains no shortcuts.	 \qed
\end{proof}

Hence, from Lemma \ref{acyclic} and Lemma \ref{semi}, we have the following theorem.

\begin{theorem}\label{mu_cn}
	For $n \ge 2$, the line graph $L(\mu(C_{2n+1}))$ is word-representable.
\end{theorem}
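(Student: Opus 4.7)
The plan is to invoke the fundamental characterization of Halld\'orsson, Kitaev, and Pyatkin \cite{halldorsson16}: a graph is word-representable if and only if it admits a semi-transitive orientation. Consequently, to establish that $L(\mu(C_{2n+1}))$ is word-representable for $n \ge 2$, it suffices to exhibit a single orientation of its edges that is simultaneously acyclic and free of shortcuts.

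The orientation $D$ assembled in Steps I, II, and III above is precisely the candidate. Step I transports the known semi-transitive orientation of $L(K_{m,n}) \cong K_m \square K_n$ onto the induced subgraph $L(B_{2n+1})$; Step II gives a natural acyclic shortcut-free orientation of the short cycle $L(C_{2n+1})$; and Step III orients every crossing edge uniformly from $L(B_{2n+1})$ towards $L(C_{2n+1})$, which in particular forbids any directed transit back from $L(C_{2n+1})$ into $L(B_{2n+1})$ (Remark \ref{rem:step3}).

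With $D$ in hand, the proof reduces to combining the two preceding lemmas. Lemma \ref{acyclic} shows that $D$ contains no directed cycle, by observing that any putative cycle would have to cross from $L(C_{2n+1})$ back to $L(B_{2n+1})$, contradicting Step III. Lemma \ref{semi} shows $D$ contains no shortcut, by a careful case analysis (Cases 1--4) on the possible terminal vertex $b_k \in L(C_{2n+1})$ of a shortcut, ruling out each candidate for the initial vertex $b_1$ via Remarks \ref{rem:step1}, \ref{path}, and \ref{rem:step3}.

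Therefore $D$ is an acyclic orientation of $L(\mu(C_{2n+1}))$ with no shortcut, that is, a semi-transitive orientation. By \cite[Theorem 3]{halldorsson16}, the graph $L(\mu(C_{2n+1}))$ is word-representable, completing the proof. The genuine difficulty resides entirely in Lemma \ref{semi}, since the case analysis must simultaneously exploit the layered structure of $L(B_{2n+1})$ (the levels $L_i$, $L_j'$ described in Remark \ref{level}) and the unidirectional coupling imposed by Step III; the theorem itself is then a clean corollary.
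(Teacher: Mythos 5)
Your proposal is correct and follows exactly the paper's own route: the theorem is deduced from Lemma \ref{acyclic} and Lemma \ref{semi} applied to the orientation $D$ of Steps I--III, together with the semi-transitivity characterization of \cite{halldorsson16}. You have also correctly identified that the substantive work lies in Lemma \ref{semi}, with the theorem itself being an immediate corollary.
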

               
\section{Concluding Remarks}       

In this paper, we showed that if a graph on at least $5$ vertices contains $K_4$ as a subgraph then its line graph is non-word-representable. Consequently, this led us to focus on non-word-representable graphs that are $K_4$-free, as we aim to address the open problem of whether the line graph of a non-word-representable graph is always non-word-representable. 

The class of  graphs that we have considered in this work  is  the Mycielski graphs of odd cycles, which are known to be triangle-free and non-word-representable. We  showed that their corresponding line graphs are, in fact, word-representable. Further, in view of   remarks \ref{con_1} and \ref{con_2}, we raise the following questions: 
\begin{enumerate}
	\item Is the line graph of a triangle free graphs always word-representable?
	\item Does there exist  a non-word-representable graph with clique number three whose line graph is word-representable?
\end{enumerate}

The answers to the above-mentioned two questions could potentially lead to a complete characterization of word-representable line graphs.


\begin{thebibliography}{10}
	
	\bibitem{akbar_line}
	M.~M. Akbar, P.~D. Akrobotu, and C.~P. Brewer.
	\newblock On the existence of word-representable line graphs of
	non-word-representable graphs.
	\newblock {\em arXiv:2108.02363}, 2021.
	
	\bibitem{Akrobotu_2015}
	P.~Akrobotu, S.~Kitaev, and Z.~Mas\'{a}rov\'{a}.
	\newblock On word-representability of polyomino triangulations.
	\newblock {\em Siberian Adv. Math.}, 25(1):1--10, 2015.
	
	\bibitem{Akrobotu_thesis}
	P.~D. Akrobotu.
	\newblock {\em Quantum Search on Molecular Graphs and Word-representable Line
		Graphs}.
	\newblock PhD thesis, The University of Texas at Dallas, 2021.
	
	\bibitem{werra_1999}
	D.~de~Werra and A.~Hertz.
	\newblock On perfectness of sums of graphs.
	\newblock {\em Discrete Math.}, 195(1-3):93--101, 1999.
	
	\bibitem{halldorsson10}
	M.~M. Halld{\'o}rsson, S.~Kitaev, and A.~Pyatkin.
	\newblock Graphs capturing alternations in words.
	\newblock In {\em Developments in Language Theory}, volume 6224 of {\em Lecture
		Notes in Computer Science}, pages 436--437. Springer, 2010.
	
	\bibitem{halldorsson16}
	M.~M. Halld\'orsson, S.~Kitaev, and A.~Pyatkin.
	\newblock Semi-transitive orientations and word-representable graphs.
	\newblock {\em Discrete Appl. Math.}, 201:164--171, 2016.
	
	\bibitem{hameed_2024}
	H.~Hameed.
	\newblock On semi-transitivity of (extended) {M}ycielski graphs.
	\newblock {\em Discrete Appl. Math.}, 359:83--88, 2024.
	
	\bibitem{kitaev_dlt_2017}
	S.~Kitaev.
	\newblock A comprehensive introduction to the theory of word-representable
	graphs.
	\newblock In {\em Developments in language theory}, volume 10396 of {\em
		Lecture Notes in Comput. Sci.}, pages 36--67. Springer, Cham, 2017.
	
	\bibitem{kitaev15mono}
	S.~Kitaev and V.~Lozin.
	\newblock {\em Words and graphs}.
	\newblock Monographs in Theoretical Computer Science. An EATCS Series.
	Springer, Cham, 2015.
	
	\bibitem{kitaev08}
	S.~Kitaev and A.~Pyatkin.
	\newblock On representable graphs.
	\newblock {\em J. Autom. Lang. Comb.}, 13(1):45--54, 2008.
	
	\bibitem{split_graphs_2024}
	S.~Kitaev and A.~Pyatkin.
	\newblock On semi-transitive orientability of split graphs.
	\newblock {\em Inform. Process. Lett.}, 184:Paper No. 106435, 4, 2024.
	
	\bibitem{kitaev_mycielski}
	S.~Kitaev and A.~Pyatkin.
	\newblock A note on {H}ameed's conjecture on the semi-transitivity of
	{M}ycielski graphs.
	\newblock {\em Discuss. Math. Graph Theory}, To appear, 2025.
	
	\bibitem{triangle-free_2023}
	S.~Kitaev and A.~V. Pyatkin.
	\newblock On semi-transitive orientability of triangle-free graphs.
	\newblock {\em Discuss. Math. Graph Theory}, 43(2):533--547, 2023.
	
	\bibitem{kneser_graphs_2020}
	S.~Kitaev and A.~Saito.
	\newblock On semi-transitive orientability of {K}neser graphs and their
	complements.
	\newblock {\em Discrete Math.}, 343(8):111909, 7, 2020.
	
	\bibitem{kitaev_linegraphs}
	S.~Kitaev, P.~Salimov, C.~Severs, and H.~{\'U}lfarsson.
	\newblock On the representability of line graphs.
	\newblock In {\em Developments in Language Theory}, volume 6795 of {\em Lecture
		Notes in Computer Science}, pages 478--479. Springer, 2011.
	
	\bibitem{kitaev_line2}
	S.~Kitaev, P.~Salimov, C.~Severs, and H.~{\'U}lfarsson.
	\newblock Word-representability of line graphs.
	\newblock {\em Open J. Discrete Math.}, 1(2):96--101, 2011.
	
	\bibitem{kitaev08order}
	S.~Kitaev and S.~Seif.
	\newblock Word problem of the {P}erkins semigroup via directed acyclic graphs.
	\newblock {\em Order}, 25(3):177--194, 2008.
	
	\bibitem{kitaev_survey}
	S.~V. Kitaev and A.~V. Pyatkin.
	\newblock Word-representable graphs: a survey.
	\newblock {\em Journal of Applied and Industrial Mathematics}, 12:278--296,
	2018.
	
\end{thebibliography}
\end{document}